\newtheorem{Th}{Theorem}[section]
\newtheorem{lem}[Th]{Lemma}
\theoremstyle{definition}
\newtheorem{Def}[Th]{Definition}
\newtheorem{Cor}[Th]{Corollary}
\newtheorem{Prop}[Th]{Proposition}
\theoremstyle{remark}
\newtheorem{rem}[Th]{Remark}
\newtheorem{thank}{\ \ \ Acknowledgment}
\def\scalar(#1,#2){(#1\mid#2)}
\newcommand{\ca}{\mathcal{A}}
\newcommand{\cb}{\mathcal{B}}
\newcommand{\cc}{\mathcal{C}}
\newcommand{\cu}{\mathcal{U}}
\newcommand{\ci}{\mathfrak{I}}
\newcommand{\Q}{\mathbb{Q}}
\newcommand{\Z}{{\mathbb{Z}}}
\newcommand{\N}{{\mathbb{N}}}
\newcommand{\E}{{\mathbb{E}}}
\newcommand{\1}{{\mathds{1}}}
\def\1{\,\rlap{\mbox{\small\rm 1}}\kern.15em 1}
\def\build#1_#2^#3{\mathrel{\mathop{\kern 0pt#1}\limits_{#2}^{#3}}}
\def\converge#1#2#3#4{\build\hbox to
#1mm{\rightarrowfill}_{#2\rightarrow #3}^{\hbox{\scriptsize #4}}}
\newcommand{\tend}[3][]{\xrightarrow[#2\to#3]{#1}}
\newcommand{\ds}{\displaystyle}
\title[On the pointwise convergence of multiple ergodic averages: G.S.]{On the pointwise convergence of multiple ergodic averages
and non-singular dynamical systems}
\author{E. H. el Abdalaoui}
\address{University of Rouen Normandy  \\
  LMRS  UMR 60 85 CNRS-univ, Departement of Mathematics,
Avenue de l'Universit\'e, BP.12
76801 Saint Etienne du Rouvray - France }
\email{elhoucein.elabdalaoui@univ-rouen.fr}
\thanks{}
\date{\today}
\subjclass[2010]{Primary 37A05, 37A30, 37A40; Secondary 42A05, 42A55}
\keywords{ergodic theorems, non-singular maps, Furstenberg ergodic averages, Bourgain double recurrence theorem, Birkhoff theorem, Frobenius-Perron operator, Koopman operator, Lebesgue probability space, strictly ergodic topological model, joining}
\begin{document}
\begin{abstract}It is shown that there is a non-singular dynamical system 
for which the maximal ergodic inequality does not hold. The proof is accomplished by proving that 
there exist a subsequence for which the multiple ergodic averages of commuting invertible
measure preserving transformations of a Lebesgue probability space converge almost everywhere provided
that the maps are weakly mixing with an ergodic extra condition. We further get that the non-singular strategy to solve the pointwise convergence of the Furstenberg ergodic averages fails.
\\
\end{abstract}

\maketitle
\section{Introduction}
The purpose of this note is to establish that there exist a non-singular dynamical system 
for which the maximal ergodic inequality does not hold. To this end, it is shown that the well-known open problem 
of the pointwise convergence of the Furstenberg ergodic averages has a positive answer if we restrict our self to the convergence along a subsequence. We remind that this problem can be formulated as follows:
let $k\geq 2$, $(X,\cb,\mu,T_i)_{i=1}^{k}$ be a finite family of dynamical systems where $\mu$ is a probability measure, $T_i$ are commuting  invertible  measure preserving transformations and  $f_1,f_2,\cdots,f_k$  a finite family of bounded functions. Does the following averages
$$\frac1{N}\sum_{n=1}^{N}\prod_{i=1}^{k}f_i(T_i^nx)$$
convergence almost everywhere?\\

The classical Birkhoff theorem correspond to the case $k=1$. The case $k=2$ with
$T_i=T^{p_i}$, and $p_i \in \N^*$ for each $i$, is covered by Bourgain double ergodic theorem \cite{BourgainD}.\\

 Here, our aim is to establish that there exists a subsequence $(N_l)$ such that for any  finite family of bounded functions $f_1,f_2,\cdots,f_k$  ,
$$\frac1{N_l}\sum_{n=1}^{N_l}\prod_{i=1}^{k}f_i(T_i^nx)$$
convergence almost everywhere.\\

We remind that the $L^2$ version of this problem has been intensively studied and the topics is nowadays very rich. These studies were originated in the seminal work of Furstenberg on Szemeredi's theorem in \cite{Furstenberg1}.  Furstenberg-Kaztnelson-Ornstein in \cite{FKO} proved that
the $L^2$-norm convergence holds for $T_i=T^i$ and $T$ weakly mixing. Twenty three years later, Host and Kra \cite{Host} and independently T. Ziegler \cite{Ziegler} extended Furstenberg-Katznelson-Ornstein result by proving that for any transformation preserving measure, the $L^2$-norm convergence for $T_i=T^i$ holds . In 1984, J. Conze and E. Lesigne in  \cite{Conze-Le} gives a positive answer for the case $k=2$. Under some extra ergodicity assumptions, Conze-Lesigne result was
extended to the case $k=3$ by Zhang in \cite{Zhang} and for any $k \geq 2$ by Frantzikinakis and Kra in \cite{Fran-Kra}. Without these assumptions, this result was proved by T. Tao in \cite{Tao}. Subsequently, T. Austin in \cite{Austin} gives a joining alternative proof of Tao result, and recently, M. Walsh extended Tao result by proving that the $L^2$-norm convergence holds for the maps $(T_i)_{i=1}^{k}$ generate a nilpotent
group \cite{Walsh}. This solve a Bergelson-Leibman conjecture stated in \cite{Bergelson-Lei}. Therein, the authors produced a counter-examples of maps generated a solvable group for which the $L^2$-norm convergence does not hold.

For the pointwise convergence, partial results were obtained in \cite{Lesigne}, \cite{AssaniW}, \cite{Assani-S} and \cite{XiangDong} under some ergodic and spectral assumptions. In a very recent preprint \cite{Assani}, I. Assani observe that the action of the maps $T_i$ induced a dynamical system $(X^k,\bigotimes_{i=1}^k\cb,\nu,\phi)$ where $\phi(x_1,\cdots,x_k)=(T_ix_i)_{i=1}^{k}$ is a non-singular map with respect to the probability measure $\nu$ given by
$$\nu(A_1 \times A_2 \times \cdots A_k)=\frac13\sum_{n \in \Z}\frac1{2^{|n|}}\mu_{\Delta}(\phi^{-n}(A_1 \times A_2 \times \cdots A_k)),$$
with $\mu_{\Delta}$ is a diagonal probability measure on $X^k$ define on the rectangle $A_1 \times A_2 \times \cdots \times A_k$ by
$$\mu_{\Delta}(A_1\times A_2 \times \cdots A_k)=\mu(A_1 \cap A_2 \cap \cdots A_k).$$

It is easy to see that the Radon-Nikodym derivative of the pushforward measure of $\nu$ under $\phi$ satisfy
$$\frac12 \leq \frac{d\nu \circ \phi}{d\nu} \leq 2.$$

Unfortunately as we shall see in section 5, the strategy of \cite{Assani} fails since the maximal  ergodic inequality does not hold for the non-singular dynamical system $(X^k,\cb^k,\nu,\phi)$. Nevertheless, applying Tuleca's theorem \cite{Tulcea}, we are able to conclude that the individual ergodic theorem holds for $\phi$ in the sense of Tuleca.\\

We associate to $\phi$ the Koopman operator $U_{\phi}$ defined by $U_{\phi}(\overline{f})=\overline{f}\circ \phi,$ where $\overline{f}$ is a measurable function on $X^k$. Since $U_{\phi}$ maps $L^{\infty}$ on $L^{\infty}$ and $\phi$ is non-singular,
the adjoint operator $U_{\phi}^*$ acting on $L_1$ can be defined by the relation
$$\int U_{\phi}(\overline{f}).\overline{g} d\nu = \int \overline{f}. U_{\phi}^*(\overline{g}) d\nu,$$
for any $\overline{f} \in L^{\infty}$ and $\overline{g} \in L^1$. For simplicity of notation, we write $\phi^*$ instead of
$U_{\phi}^*$ and $\phi$ instead of $U_{\phi}$ when no confusion can arise.

$\phi^*$ is often referred to as the Perron-Frobenius operator or transfer operator associated with $\phi$.
The basic properties of $\phi^*$ can be found in \cite{Krengel} and \cite{aaronson}. It is well known that the pointwise ergodic theorem for $\phi$ can be characterized by $\phi^*$.
Indeed, Y. Ito established that the validity of the $L^1$-mean theorem for $\phi^*$ implies the validity of the pointwise ergodic theorem for $\phi$ from $L^1$ to $L^1$ \cite{Ito}. Moreover, the subject has been intensively studied by many authors (
Ryll-Nardzewski \cite{Ryll}, Tuleca \cite{Tulcea}, Hopf, Choksi \cite{Hopf}, Assani \cite{Assani85}, Assani-Wo\'{s} \cite{Assani-W}, Ortega Salvador \cite{Ortega}, R. Sato
 \cite{Sato1}, \cite{Sato2}). Here, we will use and adapt the Ryll-Nardzewski approach \cite{Ryll} and its extension \cite{Tulcea}. A nice account on the previous results can be found in \cite[p.31-61]{Friedman}.\\

Notice that the $L^2$-norm convergence implies that for any $k$-uplet of Borel set $(A_i)_{i=1}^{k}$, we have
$$\lim_{N \longrightarrow +\infty}\frac1{N}\sum_{n=1}^{N}\mu_{\Delta}\Big(\phi^{-n}\Big(\prod_{i=1}^{k}A_i\Big)\Big)=
\mu^{F}\Big(\prod_{i=1}^{k}A_i\Big),$$
where $\mu^{F}$ is an invariant probability measure under the actions of $(T_i)$. Following T. Austin, $\mu^{F}$ is called a Furstenberg self-joining of $(T_i)$ \cite{Austin},\cite{Austin-P}, \cite{Austin-C}. Therein, T. Austin stated the multiple recurrence theorem in the following form
$$\forall (A_i)_{i=1}^k \in \cb^k,~~~~
\mu^{F}(A_1\times A_2 \times \cdots  \times A_k)=0\Longrightarrow \mu_{\Delta}(A_1\times A_2 \times \cdots  \times A_k)=0,$$

 We further point out that if the maps $T_i=T^i$ and $T$ is a weakly mixing map, then $\mu^{F}=\bigotimes_{i=1}^{k}\mu$, that is, $\mu^{F}$ is singular with respect to $\mu_{\Delta}$ and this is not incompatible with the multiple recurrence theorem, since the absolutely continuity holds only one the sub-algebra generated by the rectangle Borel sets.\\

The proof of T. Austin is based on the description of $\mu^F$ in terms of the measure $\mu$ and various partially-invariant factors of $\cb$. This is done using a suitable extension of the originally-given system $(X,\cb,\mu, T_i)$. Here, we will further need the method used by Hansel-Raoult in \cite{Hansel-Ra} and its generalization to $\Z^d$ action obtain by B. Weiss
in \cite{Weiss}. Therein, the authors gives a generalization of the Jewett theorem using the Stone representation theorem combined with some combinatorial arguments. The proof given by Weiss yields a "uniform" extension of Rohklin towers lemma. This allows us, under a suitable assumption, to produce a strictly ergodic topological model for which we are able to show under some condition that one can drop a subsequence of Furstenberg averages for which the convergence almost everywhere holds.\\

 Summarizing, our proof is essentially based on two kind of arguments. On one hand, the Assani observation \cite{Assani} combined with some ideas from Ryll-Nardzewski result \cite{Ryll} and the Tuleca result \cite{Tulcea} and on the other hand on the $\cc$-method introduced by Austin \cite{Austin} combined with Hansel-Raoult-Weiss procedure \cite{Hansel-Ra}.\\

Let us mention that at know, in the general setting, the problem of the pointwise convergence of the Furstenberg ergodic averages still open. The only know results are Bourgain double ergodic theorem\cite{BourgainD} and the distal case \cite{XiangDong}. The paper is organized as follows\\

In section 2, we state our main result and we recall the main ingredients need it for the proof. In section 3, we establish under our assumptions the $L^2$-norm convergence. In section 4, we recall the Stone representation theorem and Hansel-Raoult-Weiss procedure used to produce a strictly ergodic topological model.  In section 5, we establish that the pointwise convergence of the multiple ergodic averages holds along some subsequence, and in the end of the section, we prove that there exists a non-singular dynamical system for which the maximal ergodic inequality does not hold. Finally, in section 6, we give a proof of our main results.

\section{Main result}\label{S2}
Let $(X,\cb,\mu)$ be a Lebesgue probability space, that is, $X$ is a Polish space (i.e. metrizable separable and complete), whose Borel $\sigma$-algebra $\cb$ is complete with respect to the probability measure $\mu$ on $X$. The notion of Lebesgue space is due to Rokhlin \cite{Rokhlin}, and it is well known \cite{Petersen} that a Lebesgue probability space is isomorphic (mod 0) to ordinary Lebesgue space $([0,1),\cc,\lambda)$ possibly together with countably many atoms, that is, there are $x_0,x_1, \cdots,\in X$, $X_0 \subset X$,
$Y_0 \subset [0,1)$, and $\phi : X_0 \bigcup \{x_i\} \longrightarrow Y_0$, which is invertible such that the pushforward measure of $\mu$ under $\phi$ is $\lambda$ with $\mu(X_0 \bigcup \{x_i\})=1=\lambda(Y_0)=1$ . Here, we will deal only with non-atomic Lebesgue space.\\

A dynamical system is given by $(X,\cb,\mu,T)$ where $(X,\cb,\mu)$ is a Lebesgue space  and
 $T$ is an invertible bi-measurable transformation which preserves the probability measure $\mu$.\\

In this context, we state our first main result in the soft form as follows
\begin{Th}\label{main2}There is a non-singular dynamical system for which the maximal ergodic inequality doesn't holds.
\end{Th}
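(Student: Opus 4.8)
The plan is to refute the weak-type $(1,1)$ maximal inequality for the Ces\`aro averages of the Koopman operator $U_\phi$, measured against $\nu$, and to do so for the very system $(X^k,\cb^k,\nu,\phi)$ produced by Assani's construction, so that the example is exactly the one relevant to the failure of the non-singular strategy. Writing
$$ M\overline f \;=\; \sup_{N\ge 1}\Big|\frac1N\sum_{n=0}^{N-1} U_\phi^{\,n}\,\overline f\,\Big|, $$
by the maximal ergodic inequality I mean the existence of a finite constant $C$ with $\nu\big(M\overline f>\lambda\big)\le \frac{C}{\lambda}\,\|\overline f\|_{L^1(\nu)}$ for all $\overline f\in L^1(\nu)$ and all $\lambda>0$. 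First I would fix a concrete choice of the data, say $k=2$ and $T_i=T^i$ with $T$ weakly mixing, so that every object entering the definition of $\nu$ is explicit.

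The crux is a computation of the Radon--Nikodym derivative of the iterates of $\phi$. Since $\nu=\tfrac13\sum_{m\in\Z}2^{-|m|}\,\mu_\Delta\circ\phi^{-m}$, a reindexing $j=m+n$ gives, for $n\ge 1$,
$$ \nu\circ\phi^{-n}=\tfrac13\sum_{m\in\Z}2^{-|m-n|}\,\mu_\Delta\circ\phi^{-m}, $$
so that, with $w_m:=\dfrac{d\,\mu_\Delta\circ\phi^{-m}}{d\nu}$,
$$ \frac{d\,\nu\circ\phi^{-n}}{d\nu}(x)=\frac{\sum_{m}2^{-|m-n|}\,w_m(x)}{\sum_{m}2^{-|m|}\,w_m(x)}. $$
Because $2^{-|m-n|}/2^{-|m|}=2^{\,|m|-|m-n|}$ equals $2^{\,n}$ for every $m\ge n$ and $2^{-n}$ for every $m\le 0$, this ratio is a weighted average of the values $2^{\,n}$ and $2^{-n}$; in particular it approaches $2^{\,n}$ on the part of $X^k$ where the mass of $\nu$ is carried by the far forward push-forwards $\mu_\Delta\circ\phi^{-m}$ with $m\ge n$. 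From this I would extract a set $B_n$ with
$$ \nu\big(\phi^{-n}B_n\big)=\int_{B_n}\frac{d\,\nu\circ\phi^{-n}}{d\nu}\,d\nu\;\ge\;c\,2^{\,n}\,\nu(B_n) $$
for an absolute constant $c>0$.

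Granting such a $B_n$, the contradiction is immediate. Testing the putative inequality on $\overline f=\1_{B_n}\ge 0$ one has $U_\phi^{\,n}\1_{B_n}=\1_{\phi^{-n}B_n}$, whence $M\1_{B_n}\ge \frac{1}{n+1}\,\1_{\phi^{-n}B_n}$ and therefore $\nu\big(M\1_{B_n}\ge \tfrac1{n+1}\big)\ge \nu(\phi^{-n}B_n)\ge c\,2^{\,n}\,\nu(B_n)$. The maximal inequality with $\lambda=\frac1{n+1}$ would force $c\,2^{\,n}\,\nu(B_n)\le C(n+1)\,\nu(B_n)$, i.e. $c\,2^{\,n}\le C(n+1)$ for every $n$, which is absurd; hence no such $C$ exists. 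The mechanism is transparent: the exponential growth $2^{\,n}$ of the cocycle along suitable orbits beats the polynomial factor $n+1$ supplied by the Ces\`aro normalization.

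The main obstacle is the construction of the sets $B_n$, that is, showing that the extreme value $2^{\,n}$ of the cocycle is genuinely realized on a set of positive $\nu$-measure and not merely as an essential supremum over a null set. This forces one to control the mutual overlaps of the push-forwards $\mu_\Delta\circ\phi^{-m}$, exhibiting points at which the densities $w_m$ are concentrated on indices $m\ge n$ and negligible for $m\le 0$. Here I would invoke precisely the spreading of the diagonal under the dynamics --- the same phenomenon behind the fact, recorded above, that for weakly mixing $T$ the Furstenberg joining $\mu^{F}=\bigotimes_{i=1}^{k}\mu$ is singular with respect to $\mu_\Delta$ --- to guarantee that $\mu_\Delta\circ\phi^{-m}$ escapes the supports of the low-index terms as $m\to\infty$. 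A softer route, which I would keep in reserve, is to argue through the Banach principle: the maximal inequality together with the $L^2$-convergence of Section~3 would upgrade to $\nu$-a.e. convergence of $\frac1N\sum U_\phi^{\,n}\overline f$ for all $\overline f\in L^1(\nu)$, and since $\mu_\Delta\ll\nu$ this would give unrestricted pointwise convergence of the Furstenberg averages on the diagonal. I prefer the direct cocycle computation because it produces the failure unconditionally, rather than merely relegating it to an open problem.
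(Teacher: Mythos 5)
Your proposal is correct in substance, but it takes a genuinely different route from the paper's. The paper never tests the inequality directly: it works on the Stone--Jewett--Weiss model $(\widehat{X}^k,\widehat{\cb}^k,\widehat{\lambda},\widehat{T})$ with $\widehat{\lambda}=\frac12\big(\widehat{\nu}+\bigotimes_{i=1}^k\widehat{\mu}\big)$ and argues by contradiction: if the maximal inequality held, then the Banach principle together with the density of continuous functions and Proposition \ref{subsequence} would propagate the subsequential pointwise convergence of the averages, with limit $\bigotimes_{j=1}^{k}\mu(f)$, from continuous $f$ to indicators of all Borel sets; testing on the invariant set $A=\bigcup_{n\in\Z}\widehat{T}^n\Delta$, whose averages are identically $\1_A$ and whose $\widehat{\lambda}$-integral is $\frac12$, while the propagated limit would be $\bigotimes_{j=1}^{k}\mu(A)=0$, gives the contradiction. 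You instead refute the weak-type inequality unconditionally on Assani's system $(X^k,\cb^k,\nu,\phi)$ itself, by pitting the exponential growth $2^{n}$ of the Radon--Nikodym cocycle against the Ces\`aro factor $\frac1{n+1}$. What you give up is the link with the subsequence theorem (Theorem \ref{main1}); what you gain is an elementary, quantitative counterexample that needs neither the strictly ergodic model, nor the $L^2$ theory of Section 3, nor any appeal to a dense class. (Incidentally, the ``softer route'' you keep in reserve is essentially the paper's proof, and it does yield a genuine contradiction rather than a mere reduction to the open problem, precisely because the limit gets identified as $\bigotimes_{j=1}^k\mu(f)$, which is incompatible with the invariant set above.)

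The one gap you flag --- the construction of $B_n$ --- closes at once, with no estimate of overlaps and no appeal to the singularity of $\mu^F$: take $B_n=\phi^{n}\Delta$, the $n$-th image of the diagonal. Since $T$ is weakly mixing on a non-atomic space, it is aperiodic, so $\mu\{x:T^{j}x=x\}=0$ for $j\neq 0$; and a point $(y,\dots,y)\in\Delta$ lies in $\phi^{j}\Delta$ only if $y=T^{j}x$ for some $x$ with $T^{j}x=x$ (compare the first two coordinates when $T_i=T^i$). Hence $\mu_{\Delta}(\phi^{j}\Delta)=0$ for every $j\neq 0$, i.e. the sets $(\phi^{m}\Delta)_{m\in\Z}$ are pairwise $\nu$-essentially disjoint --- this is exactly the disjointness the paper itself records in the proof of Lemma \ref{dissipative}. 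Directly from the definition of $\nu$ one then gets
$$\nu(B_n)=\frac13\,2^{-n},\qquad \nu(\phi^{-n}B_n)=\nu(\Delta)=\frac13=2^{\,n}\,\nu(B_n),$$
so your inequality holds with $c=1$ (indeed on $\phi^{m}\Delta$ all densities $w_{m'}$ with $m'\neq m$ vanish, so the cocycle equals $2^{n}$ exactly there). Your test computation then forces $2^{n}\leq C(n+1)$, which is absurd; moreover, since $\big\|\1_{B_n}\big\|_{L^p(\nu)}=\big(\tfrac13 2^{-n}\big)^{1/p}$, the same sets refute the $L^p$ maximal inequality for every finite $p$, matching the generality of the paper's definition.
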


Our second main result can be stated as follows
\begin{Th}\label{main1}Let $k \in \N^*$ and $(X,\cb,\mu,T_i)_{i=1}^{k}$ be a finite family of dynamical systems where $(X,\cb,\mu)$ is Lebesgue probability space, and assume that $T_1,T_2,\cdots,T_k$ are commuting weakly mixing transformations on $X$ such that for any $i \neq j$, the map $T_i \circ T_j^{-1}$ is ergodic. Then, there is a subsequence $(N_l)$ such that for
every $f_i \in L^{\infty}(\mu)$, $i=1,\cdots,k$, the averages
$$ \frac1{N_l}\sum_{n=1}^{N_l}\prod_{i=1}^{k}f_i(T_i^nx)$$
 converges almost everywhere to $\ds \prod_{j=1}^{k}\int f_j d\mu$.
\end{Th}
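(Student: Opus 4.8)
The plan is to realise the Furstenberg averages as Birkhoff-type averages for the non-singular map $\phi$ on $X^k$ and then to extract a \emph{single} subsequence along which they converge pointwise. Writing $\overline{f}=f_1\otimes\cdots\otimes f_k$ on $X^k$ and $\delta(x)=(x,\dots,x)$ for the diagonal embedding, one has $\prod_{i=1}^k f_i(T_i^n x)=\overline{f}\bigl(\phi^n(\delta(x))\bigr)$, so the averages in the statement are the restrictions to the diagonal of $A_N\overline{f}:=\frac1N\sum_{n=1}^N U_\phi^n\overline{f}$. Since the $n=0$ term in the definition of $\nu$ gives $\nu\ge\frac13\mu_{\Delta}$, we have $\mu_{\Delta}\ll\nu$, and hence any statement holding $\nu$-a.e.\ holds $\mu_{\Delta}$-a.e., i.e.\ $\mu$-a.e.\ on the diagonal, which is exactly the conclusion sought.

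First I would record the $L^2$ input. Under the hypotheses (each $T_i$ weakly mixing and each $T_i\circ T_j^{-1}$ ergodic) the Furstenberg self-joining is the product measure $\mu^{F}=\bigotimes_{i=1}^k\mu$, so the averages converge in $L^2(\mu)$ to the constant $c:=\prod_{j=1}^k\int f_j\,d\mu$. I would feed this into the averaging structure of $\nu$: using
\[
\int g\,d\nu=\frac13\sum_{m\in\Z}2^{-|m|}\int g\circ\phi^m\,d\mu_{\Delta},
\]
and noting that $\int \bigl|A_N\overline{f}-c\bigr|^2\circ\phi^m\,d\mu_{\Delta}$ is the squared $L^2(\mu)$-distance from $c$ of the Furstenberg average built from the functions $f_i\circ T_i^m$ (whose integrals are still $\int f_i\,d\mu$, since $T_i$ preserves $\mu$), each such term tends to $0$ as $N\to\infty$. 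Dominated convergence over $m$ then gives $\|A_N\overline{f}-c\|_{L^2(\nu)}\to 0$, hence convergence in $\nu$-measure; so for each \emph{fixed} tuple there is a subsequence along which $A_{N_l}\overline{f}\to c$ holds $\nu$-a.e., and therefore $\mu$-a.e.\ on the diagonal.

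The remaining and most delicate point is to make $(N_l)$ independent of the functions. Here I would pass to a strictly ergodic topological model via the Hansel--Raoult--Weiss procedure (Stone representation together with the uniform Rokhlin-tower construction of Weiss for $\Z^k$-actions), so that $X$ is compact metric, the $T_i$ are commuting homeomorphisms, and $C(X)$ is separable. Fixing a countable sup-norm-dense family $\{g_j\}\subset C(X)$, I would apply the previous step to every finite tuple drawn from $\{g_j\}$ and diagonalise to get one subsequence $(N_l)$ along which $A_{N_l}(g_{j_1},\dots,g_{j_k})\to\prod_i\int g_{j_i}\,d\mu$. The multilinear estimate
\[
\bigl|A_N(f_1,\dots,f_k)(x)-A_N(g_1,\dots,g_k)(x)\bigr|\le M^{k-1}\sum_{i=1}^k\|f_i-g_i\|_\infty,
\]
with $M$ a common sup-norm bound, is uniform in both $x$ and $N$, and so upgrades this to convergence for \emph{every} continuous tuple along the same $(N_l)$.

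The main obstacle is the final passage from continuous to arbitrary $L^\infty$ functions. Because Section~5 shows that the maximal ergodic inequality fails for $\phi$, the usual Banach-principle argument (dense class plus maximal inequality yields a.e.\ convergence for all functions) is unavailable, and an $L^\infty$ function cannot be sup-norm-approximated by continuous ones. I expect to circumvent this by combining the individual ergodic theorem for $\phi$ in the sense of Tulcea with the $L^2(\nu)$ convergence that pins down the limit: approximating a general $L^\infty$ tuple in $L^2(\nu)$ and exploiting that the target is a deterministic constant rather than a genuine function, so that the exceptional sets can be controlled uniformly along $(N_l)$. Verifying this uniform control of the null sets, in the absence of a maximal inequality, is the crux of the argument.
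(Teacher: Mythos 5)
Your first three steps coincide with the paper's own argument: realizing the averages through $\phi$, transferring the $L^2(\mu)$ convergence to $L^2$ convergence over the averaged measure for continuous functions on the Stone--Jewett--Weiss model (Lemma \ref{curcial} and \eqref{continueL2}), and diagonalizing over a countable sup-norm-dense family of continuous functions to get one subsequence $(N_l)$ (Proposition \ref{subsequence}); all of this is sound. The genuine gap is exactly the step you flag as the crux: the passage from continuous tuples to $L^\infty$ tuples. Your proposed fix cannot work as stated. Tulcea's theorem (Theorem \ref{Tuleca}) governs the weighted Hurewicz-type averages $\frac1N\sum f\circ\widehat{T}^n\,\frac{d\nu\circ\widehat{T}^n}{d\nu}$, not the unweighted Ces\`aro averages; moreover the system $(\widehat{X}^k,\widehat{\cb}^k,\widehat{\nu},\widehat{T})$ is completely dissipative (Lemma \ref{dissipative}), so those weighted averages converge to a ratio of two convergent series and give no control whatsoever over $\frac1N\sum f\circ\widehat{T}^n$. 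And, as you yourself observe, $L^2(\nu)$-approximation alone, in the absence of a maximal inequality, cannot control the exceptional sets along a fixed subsequence -- even when the limit is a constant.

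The idea you are missing is elementary and sidesteps the non-singular machinery entirely at this stage. Approximate each $f_i\in L^{\infty}(\widehat{\mu})$ by a continuous $g_i$ via \emph{Lusin's theorem}, so that $\|g_i\|_{\infty}\le\|f_i\|_{\infty}$ and $\widehat{\mu}\{f_i\neq g_i\}\le\epsilon$ (approximation in measure, not in sup norm). Then the difference of the multiple averages is bounded pointwise by
$$\frac1{N}\sum_{n}|f_1-g_1|(\widehat{T}_1^n x)\,\|f_2\|_{\infty}+\frac1{N}\sum_{n}|f_2-g_2|(\widehat{T}_2^n x)\,\|f_1\|_{\infty}+\cdots,$$
plus the continuous-tuple term handled by Proposition \ref{subsequence} and a constant term. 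The point is that each error term above is a \emph{classical one-transformation Birkhoff average} for the measure-preserving ergodic map $\widehat{T}_i$ on $(\widehat{X},\widehat{\mu})$; by the ordinary Birkhoff theorem it converges $\widehat{\mu}$-a.e.\ (along the full sequence, hence along $(N_l)$) to $\int|f_i-g_i|\,d\widehat{\mu}\le 2\|f_i\|_{\infty}\epsilon$. Therefore $\limsup_{l}\bigl|\frac1{N_l}\sum_{n}\prod_i f_i(\widehat{T}_i^n x)-\prod_i\int f_i\,d\mu\bigr|\le C\epsilon$ almost everywhere, and intersecting the full-measure sets over rational $\epsilon>0$ finishes the proof. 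This is precisely how the paper circumvents the failure of the maximal inequality for $\phi$: the only almost-everywhere control ever needed beyond the subsequence of Proposition \ref{subsequence} is for the individual measure-preserving maps $T_i$, where Birkhoff's theorem is freely available.
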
 

The proof is based on the Tuleca's result \cite{Tulcea} and some ideas from the Ryll-Nardzewski approach \cite{Ryll} combined with the Cantor diagonal method, and the machinery of $\cc$-systems introduced by T. Austin. We remind that this machinery allows T. Austin to obtain a joining proof of the Tao theorem on the $L^2$-norm convergence of the Furstenberg ergodic averages. We will use the $\cc$-systems machinery in the section 3. Here, we recall the Ryll-Nardzewski theorem \cite{Ryll}.

\begin{Th}[Ryll-Nardzewski \cite{Ryll}]\label{satolem} Let $(X,\cb,\nu)$ be a $\sigma$-finite measure and $\phi$ an invertible map on $X$ such that the pushforward measure of $\nu$ under $\phi$ is absolutely continuous with respect to $\nu$.
 Then the following conditions are equivalent.
\begin{enumerate}[(a)]
  \item The operator $Tf=f \circ \phi$ satisfies the pointwise ergodic theorem
  from $L^{1}(\nu)$ to $L^{1}(\nu)$, that is, for any function $f \in L^1(\nu)$ there is a function $g \in L^1(\nu)$ such that
  $$\lim_{N \longrightarrow +\infty}\frac1{N}\sum_{n=1}^{N}(T^{n}f)(x)=g(x)~~~~~~~~ \nu{\textrm {.a.e.}}~$$
  \item There is a constant $K$ such that for any Borel set A and each $Y$ with \linebreak $\nu(Y)<+\infty$ we have
  $$ \limsup_{N \longrightarrow +\infty}\frac1{N}\sum_{n=1}^{N}\nu(Y \cap \phi^{-n}A)\leq K \nu(A),$$
\end{enumerate}
\end{Th}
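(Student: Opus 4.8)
The plan is to regard condition (b) as an integrated (weak) maximal inequality and to exploit the duality between the Koopman operator $Tf=f\circ\phi$ and its adjoint, the transfer operator $P=\phi^{*}$ on $L^{1}(\nu)$. Recall that $P$ is a positive operator with $\int Pf\,d\nu=\int f\,d\nu$ for $f\ge 0$, obtained by testing the defining relation against $g=1$. Writing $A_{N}f=\frac1N\sum_{n=1}^{N}T^{n}f$ and noting $T^{n}\1_{A}=\1_{\phi^{-n}A}$, the identity
$$\frac1N\sum_{n=1}^{N}\nu(Y\cap\phi^{-n}A)=\int \1_{Y}\,A_{N}\1_{A}\,d\nu$$
shows that (b) is exactly a uniform bound, over all finite-measure test sets $Y$, on the Cesàro means of $A_{N}\1_{A}$.

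For the direction (a)$\Rightarrow$(b) I would invoke Banach's continuity principle. By (a) the limit $Lf=\lim_{N}A_{N}f$ exists $\nu$-a.e.\ and lies in $L^{1}(\nu)$ for every $f$, so $L:L^{1}(\nu)\to L^{1}(\nu)$ is everywhere defined and linear; the Banach principle guarantees that its graph is closed, hence $L$ is bounded with some norm $K$. Since $0\le A_{N}\1_{A}\le 1$ and $L\1_{A}\ge 0$, bounded convergence on any $Y$ with $\nu(Y)<\infty$ gives $\limsup_{N}\int_{Y}A_{N}\1_{A}\,d\nu=\int_{Y}L\1_{A}\,d\nu\le\|L\1_{A}\|_{1}\le K\nu(A)$, which is (b).

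For the substantial direction (b)$\Rightarrow$(a) I would proceed in three steps. First, establish a.e.\ convergence on the subspace $\mathcal D\subset L^{1}(\nu)$ spanned by the $T$-invariant functions and the coboundaries $g-Tg$ with $g$ bounded and supported on a finite-measure set: the average telescopes, $A_{N}(g-Tg)=\frac1N\bigl(Tg-T^{N+1}g\bigr)$, and the maximal control extracted from (b) (together with Hopf's maximal ergodic theorem for the positive contraction $P$, which converts the mean bound in (b) into a genuine maximal inequality) forces $\frac1N T^{N}g\to 0$ $\nu$-a.e. Second, prove that $\mathcal D$ is dense: passing to the adjoint, the annihilator of the coboundaries in $L^{\infty}(\nu)$ is the set of $P$-invariant functions, so density reduces to a mean-ergodic statement for $P$, which I would obtain from its positivity and the integral-preserving property $\int Pf\,d\nu=\int f\,d\nu$ via a Hopf--Chacon--Ornstein decomposition and a Yosida-type weak-compactness argument. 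Third, upgrade from $\mathcal D$ to all of $L^{1}(\nu)$ by the Banach principle: for $f\in L^{1}(\nu)$ and $d\in\mathcal D$ with $\|f-d\|_{1}$ small, the oscillation $\limsup_{N}A_{N}f-\liminf_{N}A_{N}f$ is dominated a.e.\ by $2\sup_{N}|A_{N}(f-d)|$, whose size is controlled by the maximal inequality coming from (b); letting $\|f-d\|_{1}\to0$ shows the oscillation vanishes a.e., hence convergence.

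The main obstacle is the density step on a merely $\sigma$-finite space with a non-measure-preserving $\phi$. Without an invariant probability measure, $T$ need not be an $L^{1}$-contraction, so the mean-ergodic argument that is routine in the measure-preserving case must instead be carried out on the transfer operator $P$, using positivity and $\int Pf\,d\nu=\int f\,d\nu$ both to run the Hopf decomposition and to guarantee $\frac1N T^{N}g\to0$ a.e. Passing from the integrated mean bound in (b) to the pointwise maximal inequality needed for the oscillation estimate, via Hopf's maximal ergodic theorem, is the technical heart, and it is exactly here that the uniform constant $K$ of (b) is indispensable.
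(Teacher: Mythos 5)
Your plan for the hard direction (b)$\Rightarrow$(a) rests on two steps that genuinely fail in this non-singular, $\sigma$-finite setting. First, the density step: the span of the $T$-invariant functions and the coboundaries $g-Tg$ need not be dense in $L^1(\nu)$, even when both (a) and (b) hold. Take $X=\Z$ with counting measure $\nu$ and $\phi(x)=x+1$: conditions (a) and (b) hold trivially (the averages of any $f\in\ell^1(\Z)$ tend to $0$ pointwise), but the only $T$-invariant function in $\ell^1(\Z)$ is $0$, and every coboundary lies in the closed proper hyperplane $\{f:\sum_x f(x)=0\}$, so $\overline{\mathcal{D}}\neq \ell^1(\Z)$. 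Your proposed repair via a mean ergodic theorem for the transfer operator $P$ breaks on the same example: there $P$ is the shift on $\ell^1(\Z)$, and the Ces\`aro averages of $\delta_0$ all have norm one while converging pointwise to $0$, so they converge in norm to nothing; positive $L^1$-contractions are simply not mean ergodic on the dissipative part, which is exactly the part that matters here (your duality claim is also miscast, since $P$ acts on $L^1$, not $L^{\infty}$). Second, the maximal-inequality step: Hopf's maximal ergodic theorem controls the averages of the positive contraction $P$, i.e.\ (after unwinding) the Hurewicz-type \emph{weighted} averages $\frac1N\sum f\circ\phi^n\,\frac{d\nu\circ\phi^n}{d\nu}$, not the unweighted Koopman averages $\frac1N\sum f\circ\phi^n$ appearing in (a); in the non-singular setting these differ precisely by the Radon--Nikodym weights, and no conversion of the integrated asymptotic bound (b) into a pointwise maximal inequality for the Koopman averages is available. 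Indeed, Theorem \ref{main2} of this paper exhibits a non-singular system for which the maximal ergodic inequality fails, so a strategy whose "technical heart" is such an inequality must extract it from (b) by a concrete argument, which your sketch does not supply.

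The paper, following Ryll-Nardzewski \cite{Ryll}, avoids both obstacles entirely: condition (b) is fed into a Mazur--Banach limit. For an exhausting sequence $(A_m)$ of finite-measure sets one sets $\nu_m(A)={\rm{\bf {MBlim}}}\big(\frac1N\sum_{n=0}^{N-1}\nu(\phi^n(A)\cap A_m)\big)$, obtaining finite $\phi$-invariant measures with $\nu_m\leq K\nu$ (this is where the constant $K$ of (b) enters) and $\nu_m(A)=\nu(A\cap A_m)$ for invariant $A$; the increasing limit $\rho=\lim_m\nu_m$ is then a $\sigma$-finite $\phi$-invariant measure with $\rho\leq K\nu$ agreeing with $\nu$ on invariant sets. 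One applies Birkhoff's theorem to $(X,\rho,\phi)$ and transfers the $\rho$-a.e.\ conclusion back to $\nu$ (this last passage, carried out in \cite{Ryll}, uses the agreement on invariant sets and a Hopf-type decomposition); no maximal inequality and no coboundary density appear anywhere. Your easy direction (a)$\Rightarrow$(b) is salvageable, but not by the closed-graph appeal as stated: the clean route is to observe that the limit operator $L$ is positive, linear and everywhere defined from $L^1(\nu)$ to $L^1(\nu)$, hence automatically bounded (positive operators between Banach lattices are continuous), after which your dominated-convergence computation yields (b) with $K=\|L\|$.
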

The condition (b) is called Hartman condition, and the proof of Theorem \ref{satolem} is essentially based on the notion of Mazur-Banach limit. Precisely, using the Mazur-Banach limit, Ryll-Nardzewski proved that there is a $\sigma$-finite measure $\rho$ such that, for any Borel set $A$, we have
\begin{enumerate}[(i)]
  \item $0 \leq \rho(A) \leq K \nu(A);$
  \item if $A=\phi^{-1}A$ then $\rho(A)=\nu(A)$;
  \item $\rho(\phi^{-1}A)=\rho(A)$.
\end{enumerate}
 This insure that $\rho$ is $\phi$-invariant and we can apply the Birkhoff ergodic theorem to conclude. For more details and the rest of the proof, we refer the reader to the Ryll-Nardzewski paper \cite{Ryll}. Let us further point out that therein,
Ryll-Nardzewski produce a counter-example for which the pointwise ergodic theorem in $L^1(\nu)$ doesn't imply the ergodic theorem in $L^1(\nu)$. We remind that the ergodic theorem in $L^1(\nu)$ holds, if for any $f \in L^1(\nu)$, there is a function $g \in L^1(\nu)$ such that
$$\Big\|\frac1{N}\sum_{n=1}^{N}f(\phi^n(x))-g(x)\Big\|_1 \tend{N}{+\infty}0.$$

Following Ryll-Nardzewski ideas, we choose a non-decreasing sequence
 of Borel set $(A_m)$ such that $\lim A_m=X$, $\nu(A_m)<+\infty$, and for any Borel set $A$, we define the sequence $\nu_m(A)$ by
 $$\nu_m(A)=\frac1{N}\sum_{n=0}^{N-1}\nu(\phi^n(A)\cap A_m).$$
Hence $(\nu_m(A))_{m \geq 0}$ is a bounded sequence, and it is an easy exercise to see that we can extend the operator ${\boldsymbol{\lim}}$ on the space of real bounded sequences to obtain a bounded operator on $\ell^{\infty}$ by Hahn-Banach theorem. We denote such operator by $\rm{\bf {MBlim}}$. This allows us to define a sequence of finite measures $\nu_m$ on $X$ given by
 $$\nu_m(A)={\rm{\bf {MBlim}}}\Big(\frac1{N}\sum_{n=0}^{N-1}\nu(\phi^n(A)\cap A_m)\Big).$$
 It follows that if $A=\phi(A)$ then, for any $m \in \N$, we have
$\nu_m(A)=\nu(A \cap A_m).$ We further have, for any Borel set $A$,
$$\nu_m(A) \leq K \nu(A),$$
and
\begin{eqnarray*}
\nu_{m}(\phi(A))&=&{\rm{\bf {MBlim}}}\Big(\frac1{N}\sum_{n=0}^{N-1}\nu(\phi^{n+1}(A)\cap A_m)\Big)\\
&=&{\rm{\bf {MBlim}}}\Big(\frac1{N}\sum_{n=0}^{N-1}\nu(\phi^n(A)\cap A_m)-\frac{\nu(A \cap A_m)}{N}+\frac{\nu(\phi^{N}(A) \cap A_m)}{N}\Big)\\
&=&{\rm{\bf {MBlim}}}\Big(\frac1{N}\sum_{n=0}^{N-1}\nu(\phi^n(A)\cap A_m)\Big)\\
&=&\nu_m(A),
\end{eqnarray*}
since
$${\rm{\bf {MBlim}}}\Big(-\frac{\nu(A \cap A_m)}{N}+\frac{\nu(\phi^{N}(A) \cap A_m)}{N}\Big)=0.$$
That is, $\nu_m$ is invariant under $\phi$. Now, the sequence $(\nu_m(A))$ is a bounded non-decreasing sequence. Therefore,
we can put
$$\rho(A)=\lim_{m \longrightarrow +\infty}\nu_m(A),$$
and it is easy to check that (i), (ii) and (iii) holds.
\begin{rem}The condition (i) insure that $L^1(\nu) \subset L^1(\rho)$.
\end{rem}
We remind that the Mazur-Banach limit operator ${\rm{\bf {MBlim}}}$ satisfies the following properties:
\begin{enumerate}
\item ${\rm{\bf {MBlim}}}$ is positive: ${\rm{\bf {MBlim}}}(x)\geq 0$ for every $x \in \ell^{\infty}(\N)$ with $x_n \geq 0.$
\item  ${\rm{\bf {MBlim}}}$ is normalized: ${\rm{\bf {MBlim}}}(\boldsymbol{1})=1$ where  $\boldsymbol{1}=(1,1,\cdots).$
\item ${\rm{\bf {MBlim}}}$ is shift invariant: ${\rm{\bf {MBlim}}}(Sx)={\rm{\bf {MBlim}}}(x)$ where
$Sx=(x_{n+1})_{n \geq 0}$.
\item ${\rm{\bf {MBlim}}}$ has norm one: $\big|{\rm{\bf {MBlim}}}(x)\big|\leq\|x\|_{\infty}$ for every $x \in \ell^{\infty}$.
\item For any $x \in \ell^{\infty}(\N)$:
$$\liminf(x_n) \leq {\rm{\bf {MBlim}}}(x) \leq \limsup(x_n).$$
\end{enumerate}
For more details on the connection between the ergodic theory and the Mazur-Banach limit theory, we refer the reader to \cite{Sucheston1}, \cite{Sucheston2}, \cite{Goffman} and \cite{Jerison}.\\

We notice that Ryll-Nardzewski approach yields the existence of the absolutely finite invariant measure. In the case of the existence of equivalent $\sigma$-finite invariant measure, we have the following result due to Tulcea \cite{Tulcea} (see also \cite[p.55]{Friedman})
\begin{Th}\label{Tuleca}Let $T$ be a non-singular map on a Lebesgue space $(X,\cb,\nu)$, and assume that $T$ admits a $\sigma$-finite invariant measure $\overline{\nu}$ equivalent to $\nu$. Then, for each $f \in L^1(\nu)$ there exists $f^* \in L^1(\nu)$ such that
\begin{enumerate}[(a)]
  \item $\ds\frac1{N}\sum_{n=1}^{N}f\circ T^n.\frac{d\nu \circ T^n}{d\nu}(x) \tend{n}{+\infty}f^*$ a.e.,
   \item $\ds f^* \circ T. \frac{d\nu \circ T}{d\nu}(x)=f^*$ a.e..
\end{enumerate}
\end{Th}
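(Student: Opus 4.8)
The plan is to linearize the weighted averages through a single positive operator and then trivialize that operator by the change of density furnished by $\overline{\nu}$. Write $\omega=\frac{d\nu\circ T}{d\nu}$ and introduce the weighted Koopman operator $Pf=(f\circ T)\,\omega$ on $L^1(\nu)$. By the chain rule (cocycle identity) for Radon--Nikodym derivatives one has $\frac{d\nu\circ T^n}{d\nu}(x)=\prod_{j=0}^{n-1}\omega(T^jx)$, whence $P^nf=(f\circ T^n)\cdot\frac{d\nu\circ T^n}{d\nu}$, so that the $N$-th average in (a) is exactly $\frac1N\sum_{n=1}^{N}P^nf$. A direct change of variables gives $\|Pf\|_{L^1(\nu)}=\|f\|_{L^1(\nu)}$ and $\int Pf\,d\nu=\int f\,d\nu$ for $f\geq 0$, so $P$ is a positive $L^1(\nu)$-isometry and it suffices to prove the a.e. convergence of its Cesàro averages.

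The key computation is the density identity. Put $h=\frac{d\overline{\nu}}{d\nu}$, which is finite and strictly positive $\nu$-a.e. because $\nu$ and $\overline{\nu}$ are equivalent. Testing against an arbitrary Borel set $A$, using $\omega\,d\nu=d(\nu\circ T)$ together with the invariance $\overline{\nu}(TA)=\overline{\nu}(A)$, I would verify
$$\int_A (h\circ T)\,\omega\,d\nu=\int_{TA} h\,d\nu=\overline{\nu}(TA)=\overline{\nu}(A)=\int_A h\,d\nu,$$
so that $(h\circ T)\,\omega=h$ a.e. This identity is what turns the multiplication map $J\colon L^1(\overline{\nu})\to L^1(\nu)$, $Jg=gh$, into an isometric isomorphism that intertwines $P$ with the ordinary Koopman operator $Ug=g\circ T$ of the measure-preserving system $(X,\cb,\overline{\nu},T)$: indeed $P(gh)=(g\circ T)(h\circ T)\,\omega=(g\circ T)\,h=J(Ug)$, hence $P^nJ=JU^n$.

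Given $f\in L^1(\nu)$, set $g=f/h\in L^1(\overline{\nu})$ (well defined since $h>0$ a.e., with $\int|g|\,d\overline{\nu}=\int|f|\,d\nu$). The intertwining yields $\frac1N\sum_{n=1}^N P^nf=h\cdot\frac1N\sum_{n=1}^N g\circ T^n$. I would then invoke the individual ergodic theorem for the $\sigma$-finite measure-preserving system $(X,\cb,\overline{\nu},T)$: for $g\in L^1(\overline{\nu})$ the averages $\frac1N\sum_{n=1}^N g\circ T^n$ converge $\overline{\nu}$-a.e. to a $T$-invariant $g^{*}\in L^1(\overline{\nu})$. Since $\nu\sim\overline{\nu}$, the notions ``$\overline{\nu}$-a.e.'' and ``$\nu$-a.e.'' coincide, so $\frac1N\sum_{n=1}^N P^nf\to hg^{*}=:f^{*}$ $\nu$-a.e., with $f^{*}\in L^1(\nu)$ because $\int|f^{*}|\,d\nu=\int|g^{*}|\,d\overline{\nu}\leq\int|g|\,d\overline{\nu}<+\infty$; this is (a). Finally (b) is immediate from the two invariances already in hand: $f^{*}\circ T\cdot\frac{d\nu\circ T}{d\nu}=(g^{*}\circ T)(h\circ T)\,\omega=g^{*}\,h=f^{*}$.

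I expect the main obstacle to be the ergodic-theorem input just used when $\overline{\nu}$ is infinite: there the classical Birkhoff theorem does not apply and one must appeal to Hopf's individual ergodic theorem for $\sigma$-finite invariant measures (via the Hopf conservative--dissipative decomposition), in which case $g^{*}$ vanishes on the conservative part. The density identity $(h\circ T)\,\omega=h$ is short but is the genuine crux of the reduction, and keeping the Radon--Nikodym conventions consistent there is where care is needed.
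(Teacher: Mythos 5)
Your proof is correct, and it takes a genuinely different route from the paper, which in fact offers no proof at all: the paper states the result as Tulcea's theorem, cites \cite{Tulcea} and \cite[p.55]{Friedman}, and only remarks that it ``can be derived from the Hurewicz-Halmos-Oxtoby ratio ergodic theorem'' together with the Hopf decomposition. Your argument instead uses the hypothesis of an equivalent $\sigma$-finite invariant measure from the very start: the identity $(h\circ T)\,\omega=h$ with $h=d\overline{\nu}/d\nu$ (equivalently $\frac{d\nu\circ T^n}{d\nu}=h/(h\circ T^n)$) conjugates the weighted operator $P$ into the ordinary Koopman operator of the measure-preserving system $(X,\cb,\overline{\nu},T)$, after which (a) is the individual ergodic theorem for $\sigma$-finite invariant measures (Hopf's theorem; see \cite{Krengel}) and (b) falls out of the invariance of $g^{*}$ and the same identity. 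Comparing the two: the ratio-theorem route suggested by the paper is more general in spirit, since Hurewicz's theorem needs no invariant measure; but to pass from the ratio statement (denominator $\sum_{n<N}\frac{d\nu\circ T^n}{d\nu}$) to the Ces\`aro statement (a) (denominator $N$) one must separately establish the a.e. convergence of $\frac1N\sum_{n<N}\frac{d\nu\circ T^n}{d\nu}$, which is the case $f\equiv 1$ of the theorem and in practice requires exactly your density identity --- so your reduction is the cleaner and more economical implementation, and is essentially the classical proof one finds in \cite{Friedman}. Two minor points to keep in view: your change of variables $\int_A (h\circ T)\,d(\nu\circ T)=\int_{TA}h\,d\nu$ and the cocycle identity use invertibility of $T$, which is the paper's standing convention for its dynamical systems; and the external ergodic-theorem input you flag is indeed the only non-elementary ingredient, correctly attributed to the $\sigma$-finite (Hopf) form of Birkhoff's theorem rather than the finite-measure form.
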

Tulcea theorem can be derived from the Hurewicz-Halmos-Oxtoby ratio ergodic theorem \cite{Halmos}, and it is well-known that the ratio ergodic theorem is intimately related to the following Hopf decomposition theorem

\begin{Th}[Hopf decomposition theorem \cite{Hopf}]Let $(X,\cb,T,\sigma)$ be a non-singular dynamical system. Then, there exists
a decomposition $X=X_c \cup X_d$, such that $X_c,X_d$ are measurable, invariant under $T$, and such that $T|_{X_c}$ is incompressible, $T|_{X_d}$ completely dissipative. We further have
$$X_c=\Big\{x~~:~~\sum_{n \geq 0}\frac{d\sigma \circ T^{n}}{d\sigma}=+\infty\Big\},$$
$$X_d=\Big\{x~~:~~\sum_{n \geq 0}\frac{d\sigma \circ T^{n}}{d\sigma}<+\infty\Big\}.$$
\end{Th}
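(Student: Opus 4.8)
The plan is to build the conservative/dissipative splitting from wandering sets and only afterwards match it against the stated divergence criterion. Call a set $W \in \cb$ \emph{wandering} if the iterates $\{T^{-n}W\}_{n \geq 0}$ are pairwise disjoint. First I would produce a maximal wandering set $W^{*}$: exhaust $X$ by finite-measure sets $Y_k \uparrow X$, on each finite-measure $A$ form the ``never-returning'' part $W_A = A \setminus \bigcup_{n \geq 1} T^{-n}A$ (which is wandering, since a point returning to $A$ cannot lie in two disjoint iterates of $W_A$), and combine these into a single wandering set whose $\Z$-orbit is maximal. Setting $X_d := \bigcup_{n \in \Z} T^n W^{*}$ and $X_c := X \setminus X_d$ then gives two measurable $T$-invariant sets, and by construction $T|_{X_d}$ is completely dissipative.

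Next I would check that $T|_{X_c}$ is incompressible. If $X_c$ contained a wandering subset of positive measure, it could be adjoined to $W^{*}$, contradicting maximality; hence $X_c$ carries no wandering subset, which by the Halmos recurrence theorem is exactly the statement that $T|_{X_c}$ is conservative, equivalently incompressible. Thus $X = X_c \cup X_d$ is already the desired decomposition into an incompressible and a completely dissipative part, and it remains to identify the two pieces with the level sets of the series.

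To obtain the explicit formulas I would set $\omega_n = \frac{d\sigma \circ T^n}{d\sigma}$ and $h = \sum_{n \geq 0}\omega_n \in [1,+\infty]$. The chain rule for Radon--Nikodym derivatives gives the cocycle identity $\omega_{m+n} = \omega_m \cdot (\omega_n \circ T^m)$, whence summing over $n$ yields the renewal equation $h = 1 + \omega_1 \cdot (h \circ T)$; since $\omega_1 > 0$ a.e. (as $T$ is invertible and non-singular), this already forces $\{h = +\infty\} = T^{-1}\{h = +\infty\}$ mod $0$, so the level sets are $T$-invariant. For finite-measure $A$ one has $\int_A h\,d\sigma = \sum_{n}\sigma(T^n A)$. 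On $X_c$, if $\int_A h\,d\sigma < +\infty$ for some $A \subseteq X_c$ of positive measure, then Borel--Cantelli would give $\sigma(\limsup_n T^n A) = 0$, contradicting the infinite recurrence guaranteed by conservativity together with invertibility; hence $h = +\infty$ a.e. on $X_c$. On $X_d$, the pairwise disjointness of the orbit of $W^{*}$ makes these measure sums summable on each level, so $h < +\infty$ a.e. This matches the displayed identities $X_c = \{h = +\infty\}$ and $X_d = \{h < +\infty\}$.

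The main obstacle is the maximal wandering set construction, intertwined with the correct bookkeeping in the sum-formula identification. Two points need care: the passage ``summable return measures $\Rightarrow$ genuine wandering set'' via Borel--Cantelli genuinely uses invertibility; and the legitimacy of using the constant density in $\sum_n \omega_n$ rests on $\sigma$ being finite here (so that $\mathbf{1} \in L^1(\sigma)$), whereas in the general $\sigma$-finite case one must replace $\mathbf{1}$ by a strictly positive $L^1$ function and prove that the resulting decomposition is independent of that choice---which is itself the heart of Hopf's original argument. The renewal equation and the elementary disjointness bound are routine; the packing and maximality step is the substantive part I would write out in full.
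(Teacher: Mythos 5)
The paper never proves this theorem: it is recalled as classical background (the citation \cite{Hopf} points to Choksi's extension of Hopf's theorem) and is used only to motivate the Hurewicz--Halmos--Oxtoby ratio ergodic theorem, so there is no internal argument to compare yours against; it has to be judged on its own terms. Judged that way, your route --- dissipative part as the orbit of a maximal wandering set, Halmos recurrence on the complement, the cocycle/renewal identity $h = 1 + \omega_1\cdot(h\circ T)$ giving invariance of $\{h=+\infty\}$, and the Borel--Cantelli versus recurrence argument for the identification --- is the standard proof, and your third paragraph is correct as written, including the observation that invertibility is genuinely used: $\limsup_n T^n A$ concerns backward visits, so what is needed is conservativity of $T^{-1}$, which for invertible maps follows from that of $T$ because the two have the same wandering sets.

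Two concrete warnings on the parts you left compressed. First, the wandering-set step fails if ``combine these'' means combining only the never-returning parts $W_{Y_k}$ of the chosen exhaustion: take $X=\Z$ with $\sigma(\{n\})=2^{-|n|}$ and $T(n)=n+1$; this is a finite measure space, so $Y_k=X$ is an admissible exhaustion, and $W_X = X\setminus\bigcup_{n\geq 1}T^{-n}X=\emptyset$, yet the system is completely dissipative (the orbit of the wandering set $\{0\}$ is everything). So the output of your recipe depends on the exhaustion, and maximality must instead be taken over \emph{all} wandering sets --- equivalently over the never-returning parts of all finite-measure sets, since any finite-measure subset $A$ of a wandering set satisfies $W_A=A$ --- via the usual supremum-and-packing argument with respect to a finite equivalent measure. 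Second, your closing caveat is not a pedantic footnote but a correction to the statement itself: with the constant density the displayed identities are false for infinite $\sigma$-finite measures (translation by one on $\R$ with Lebesgue measure is completely dissipative, yet $\sum_{n\geq 0} d(\sigma\circ T^n)/d\sigma \equiv +\infty$), so the theorem as printed implicitly assumes $\sigma$ finite --- harmless for this paper, whose applications are to probability measures --- or else $\mathbf{1}$ must be replaced by a strictly positive integrable weight, exactly as you say.
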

We remind that the dynamical system $(Y,\cb,T,\sigma)$ is incompressible if for any  Borel set $A$ such that
$T^{-1}A \subset A$, we have $\sigma(A \Delta T^{-1}A)=0$. The dynamical system $(Y,\cb,T,\sigma)$ is completely dissipative if there is a Borel set $W$ such that for any $n \neq m$, $T^nW \cap T^mW=\emptyset$ and $Y=\cup_{n \in \Z} T^nW,$ and if we put
$$R_N(T,f)=\frac{\ds \sum_{n=0}^{N-1}f\circ T^n(x)\frac{d\sigma \circ T^{n}}{d\sigma}(x)}
{\ds \sum_{n=0}^{N-1}\frac{d\sigma \circ T^{n}}{d\sigma}(x)},$$
for any $f \in L^1(\sigma)$. Then
\begin{Th}[Hurewicz-Halmos-Oxtoby ergodic theorem \cite{Halmos})] On $X_c$, for all $f \in L^1(\sigma)$, $R_N(T,f)$
converge a.e. to a limit function $f^* \in L^1(\sigma)$; $f^* \circ T=f^*$ a.e. and
$$\int_{X_c}f^* d\sigma=\int_{X_c}f d\sigma.$$
On $X_d$, for all $f \in L^1(\sigma)$, $R_N(T,f)$
converge a.e. to the ratio of the two convergent series $\ds \sum_{n=0}^{+\infty}f\circ T^n(x)\frac{d\sigma \circ T^{n}}{d\sigma}(x)$
and $\ds \sum_{n=0}^{+\infty}\frac{d\sigma \circ T^{n}}{d\sigma}(x).$
\end{Th}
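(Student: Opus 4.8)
The plan is to reduce the statement to the Chacon--Ornstein ratio ergodic theorem for a single positive $L^1$-contraction and then read off the two cases from the Hopf decomposition recalled above. Introduce the cocycle $\omega_n(x)=\frac{d\sigma\circ T^n}{d\sigma}(x)$, which satisfies the chain rule $\omega_{n+m}(x)=\omega_n(x)\,\omega_m(T^nx)$, and define the operator $Sf=(f\circ T)\,\omega_1$. Iterating and using the chain rule gives $S^nf=(f\circ T^n)\,\omega_n$, so that if we put $A_Nh=\sum_{n=0}^{N-1}S^nh$ then $R_N(T,f)=\dfrac{A_Nf}{A_N\1}$, where $A_N\1=\sum_{n=0}^{N-1}\omega_n$. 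Because $T$ is non-singular, the change-of-variables formula yields $\int Sh\,d\sigma=\int h\,d\sigma$ for every $h\ge0$, so $S$ is integral-preserving on the cone of non-negative functions and in particular a positive $L^1$-contraction. With this reduction, the Hopf decomposition is precisely the dichotomy $\sum_{n\ge0}\omega_n=+\infty$ on $X_c$ and $\sum_{n\ge0}\omega_n<+\infty$ on $X_d$, which lets me treat the two pieces separately.

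On the dissipative part I would argue by hand. Choosing a wandering set $W$ with $X_d=\bigsqcup_{n\in\Z}T^nW$ as in the definition of complete dissipativity, and using that $\sum_{n\ge0}\omega_n<\infty$ $\sigma$-a.e.\ on $X_d$, one checks that for every $f\in L^1(\sigma)$ both series $\sum_{n\ge0}(f\circ T^n)\omega_n$ and $\sum_{n\ge0}\omega_n$ converge absolutely (the disjointness of the pieces $T^nW$ makes the termwise domination summable). Hence $R_N(T,f)$ converges to the quotient of the two sums, which is exactly the assertion on $X_d$.

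The conservative part carries the real content, since there $A_N\1\to+\infty$. The engine is Hopf's maximal ergodic inequality: for $h\in L^1(\sigma)$, on the set $E=\{\sup_{N\ge1}A_Nh>0\}$ one has $\int_E h\,d\sigma\ge0$, which follows from the positivity and contractivity of $S$ by the usual telescoping of the maximal function. Granting this, I would run the classical oscillation argument: for rationals $\alpha<\beta$ consider $B_{\alpha,\beta}=\{\liminf R_N<\alpha<\beta<\limsup R_N\}$, which is essentially $T$-invariant; applying the maximal inequality on $B_{\alpha,\beta}$ to $h=f-\beta g$ and to $h=\alpha g-f$ (with $g$ a strictly positive $L^1$ weight used to keep everything integrable, in place of $\1$) yields $\beta\int_{B}g\le\int_{B}f\le\alpha\int_{B}g$, so conservativity forces $\sigma(B_{\alpha,\beta})=0$ and $R_N(T,f)$ converges a.e.\ to a limit $f^*$. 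The invariance $f^*\circ T=f^*$ follows by comparing $R_{N+1}$ with $R_N$ through the cocycle identity, and the identity $\int_{X_c}f^*\,d\sigma=\int_{X_c}f\,d\sigma$ comes from identifying $f^*$ with the conditional expectation of $f$ onto the $\sigma$-algebra $\mathcal{I}$ of $T$-invariant sets, using $\int Sh\,d\sigma=\int h\,d\sigma$ and a dominated passage to the limit of the Ces\`aro averages, exactly in the spirit of the Ryll--Nardzewski computation reproduced earlier.

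The main obstacle is the a.e.\ convergence on $X_c$. The maximal inequality only controls the maximal function, and upgrading it to convergence of the \emph{ratio} requires the Chacon--Ornstein filling scheme, i.e.\ the delicate comparison of the numerator and denominator partial sums along conservative orbits; this, rather than the bookkeeping on $X_d$, is where the genuine work lies.
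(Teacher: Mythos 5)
The paper offers no proof of this statement to compare against: it is quoted as the classical Hurewicz--Halmos--Oxtoby theorem, with a citation to Halmos (1946), and is then used as a black box (to justify Tulcea's theorem). Judged against the classical argument, your outline is essentially the standard proof of Hurewicz's theorem: the reduction $R_N(T,f)=A_Nf/A_N\1$ with $Sh=(h\circ T)\,\omega_1$, $S^nh=(h\circ T^n)\,\omega_n$, the integral preservation of $S$ on nonnegative functions, the identification of $X_c,X_d$ with the conservative and dissipative parts of $S$, the wandering-set summation on $X_d$ (integrate the two-sided sum $\sum_{n\in\Z}S^n|f|$ over $W$, use disjointness of the $T^nW$ to get $\int_W\sum_{n\in\Z}S^n|f|\,d\sigma\le\|f\|_1$, then transport along the cocycle to all of $X_d$), and Hopf's maximal lemma plus the oscillation argument on $X_c$. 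This is how the theorem is proved in the literature, e.g.\ in Aaronson's book cited by the paper.

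Two points need repair, both on $X_c$. First, you misplace the role of conservativity: $\sigma(B_{\alpha,\beta})=0$ follows from $\beta\int_Bg\le\int_Bf\le\alpha\int_Bg$ together with $\alpha<\beta$ and $g>0$ alone; what conservativity actually buys is the \emph{invariance} of $B_{\alpha,\beta}$, which you assert but do not justify. The cocycle identity gives $R_N(f)(Tx)=\bigl(A_{N+1}f(x)-f(x)\bigr)/\bigl(A_{N+1}\1(x)-1\bigr)=R_{N+1}(f)(x)+\bigl(R_{N+1}(f)(x)-f(x)\bigr)/\bigl(A_{N+1}\1(x)-1\bigr)$, and on $X_c$ the error term tends to $0$ only once you know $\sup_N R_N(|f|)<\infty$ a.e.; so you must either first prove the weak-type bound $\lambda\,\sigma\{\sup_N R_N(|f|)>\lambda\}\le\|f\|_1$ (immediate from Hopf's lemma applied to $|f|-\lambda\1$), or run the oscillation argument for bounded $f$ and extend to $L^1$ by the Banach principle. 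Second, your closing worry is unfounded: the Chacon--Ornstein filling scheme is \emph{not} needed here. It is needed for general positive $L^1$-contractions, where there is no underlying point map; for an invertible non-singular $T$ the two facts that rescue the naive argument are precisely $S^n(h\1_D)=\1_D\,S^nh$ for invariant $D$ and the invariance of $\limsup R_N$ and $\liminf R_N$ on $X_c$. Once these are in place, Hopf's lemma and the oscillation argument do close the proof, and the remaining claims ($f^*\in L^1$, $f^*\circ T=f^*$, and $\int_{X_c}f^*d\sigma=\int_{X_c}f\,d\sigma$) follow from the same localization, which gives $\alpha\,\sigma(D)\le\int_Df\,d\sigma\le\beta\,\sigma(D)$ for every invariant $D\subseteq X_c\cap\{\alpha\le f^*\le\beta\}$, i.e.\ $f^*=E_\sigma(f\mid\mathcal{I})$ on $X_c$; your appeal to a ``dominated passage to the limit'' for this step does not work as stated, since the $R_N(f)$ admit no integrable dominating function in general.
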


Finally, our analogous result is related to the irregular set of the continuous functions. In the multifractal analysis theory, the irregular set of the measurable function $f$ is given by
$$X_{f}=\Big\{x \in X~~:~~ \liminf \frac1{N}\sum_{n=1}^{N}f \circ T^n(x) < \limsup \frac1{N}\sum_{n=1}^{N}f \circ T^n(x) \Big\}.$$ As a consequence of Birkhoff's ergodic theorem, we have
\begin{Th}\label{fractal}
For a continuous map $T$ on a compact metric space $X$, if the
function $f$ is continuous, then $\rho(X_f) = 0$ for any $T$-invariant finite measure
$\rho$ on $X$.
\end{Th}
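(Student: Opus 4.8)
The plan is to deduce this directly from the Birkhoff ergodic theorem applied to the invariant measure $\rho$. First I would reduce to the case where $\rho$ is a probability measure: if $\rho(X)=0$ there is nothing to prove, and otherwise we may replace $\rho$ by $\rho/\rho(X)$, which is still $T$-invariant and has exactly the same null sets, so the conclusion $\rho(X_f)=0$ is unaffected. Next, since $T$ is continuous it is Borel measurable and preserves $\rho$, and since $f$ is continuous on the compact space $X$ it is bounded, say $|f|\leq \|f\|_{\infty}$; in particular $f \in L^1(\rho)$ because $\rho$ is finite.

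With these observations in hand, Birkhoff's pointwise ergodic theorem furnishes a $T$-invariant Borel set $G$ with $\rho(X\setminus G)=0$ on which the averages $\frac{1}{N}\sum_{n=0}^{N-1}f\circ T^n(x)$ converge to a finite limit. The only cosmetic difference with the averages appearing in the definition of $X_f$ is the range of summation; writing
\[
\frac{1}{N}\sum_{n=1}^{N}f\circ T^n(x)=\frac{1}{N}\sum_{n=0}^{N-1}f\circ T^n(x)+\frac{f(T^N x)-f(x)}{N},
\]
and using $|f(T^N x)-f(x)|\leq 2\|f\|_{\infty}$, the correction term tends to $0$ uniformly in $x$. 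Hence on $G$ the averages $\frac{1}{N}\sum_{n=1}^{N}f\circ T^n(x)$ converge as well, so that at every point of $G$ the $\liminf$ and the $\limsup$ coincide.

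Consequently $X_f \subset X\setminus G$, whence $\rho(X_f)\leq \rho(X\setminus G)=0$, as claimed. I expect no genuine obstacle here: the statement is a routine corollary of Birkhoff's theorem once one notes that continuity of $f$ on a compact space forces integrability against every finite Borel measure. The only point requiring a line of care is the shift in the summation index between the convention used in Birkhoff's theorem and the one in the definition of $X_f$, which is disposed of by the bounded correction term displayed above.
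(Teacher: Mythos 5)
Your proof is correct and takes essentially the same approach as the paper, which simply states the theorem as an immediate consequence of Birkhoff's ergodic theorem without supplying details. Your reduction to a probability measure, the observation that continuity of $f$ on the compact space $X$ gives integrability, and the bounded correction term handling the shift of the summation index are precisely the routine steps that justify the paper's one-line deduction.
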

We further have that the irregular set satisfy
$$X_{f}=\Big\{x \in X~~:~~  \Big(\frac1{N}\sum_{n=1}^{N}f \circ T^n(x)\Big) {\textrm{~~is~not~a~Cauchy~sequence}} \Big\},$$
that is,
$$X_{f}=\bigcup_{k \geq 1 }\bigcap_{n \in \N}\bigcup_{M,N \geq n}\Big\{x \in X~~:~~ \Big|F_N(x)-F_M(x)\Big| >\frac1{k} \Big\},$$
where
$$F_N(x)=\frac1{N}\sum_{n=1}^{N}f \circ T^n(x).$$
Put
$$O_{n,k}=\bigcup_{M,N \geq n}\Big\{x \in X~~:~~ \Big|F_N(x)-F_M(x)\Big| >\frac1{k} \Big\}.$$
Obviously, $O_{n,k}$ is open set, for any $n,k \in \N^*$. Hence $X_f$ is a Borel set. Applying the same reasoning combined with the separability of the space of continuous functions $\mathcal{C}(X)$, one can see that the set of generic points is a Borel set and it has a full measure for any finite $T$-invariant measure on $X$ (a point $x$ is generic if, for any continuous function
$f$, the Birkhoff sum $\frac1N\sum_{n=1}^N f(T^nx)$ converge). For the more details, we refer the reader to \cite[Chap. 11]{Aoki}.

\section{$L^2$-norm convergence of Furstenberg averages}
We start by proving the following
\begin{Th}[Frantzikinakis-Kra \cite{Fran-Kra}]\label{fact1}Let $k \in \N^*$ and $(X,\cb,\mu,T_i)_{i=1}^{k}$ be a finite family of dynamical systems where $\mu$ is a probability measure space, and  $T_1,T_2,\cdots,T_k$ are commuting weakly mixing transformations on $X$ such that for any $i \neq j$, the map $T_i \circ T_j^{-1}$ is ergodic. Then, for
every $f_i \in L^{\infty}(\mu)$, $i=1,\cdots,k$, the averages
$$ \frac1{N}\sum_{n=1}^{N} \prod_{i=1}^{k}f_i(T_i^nx)$$
 converge in $L^2(X,\mu)$ to $\ds \prod_{j=1}^{k}\int f_j d\mu$.
\end{Th}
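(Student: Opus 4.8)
The plan is to establish the $L^2$-norm convergence of the averages
$$\frac1N\sum_{n=1}^{N}\prod_{i=1}^{k}f_i(T_i^nx)$$
under the stated weak-mixing and pairwise-ergodicity hypotheses, and to identify the limit as the product $\prod_{j=1}^{k}\int f_j\,d\mu$. The natural strategy is to reduce the problem to the diagonal system on $X^k$ and then run a van der Corput / PET-type induction on the number of factors $k$. First I would recast the average as a single ergodic average for the product transformation $\phi(x_1,\dots,x_k)=(T_1x_1,\dots,T_kx_k)$ evaluated along the diagonal, so that the claim becomes a statement about the Furstenberg self-joining $\mu^F$ converging to $\bigotimes_i\mu$. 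The key analytic device is the van der Corput inequality: bounding the $L^2$-norm of the average by a Cesàro average over shifts $h$ of inner correlation terms of the form $\int \prod_{i}f_i(T_i^nx)\,\ov{f_i(T_i^{n+h}x)}\,d\mu$.

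The heart of the argument is the inductive step. Applying van der Corput once replaces the $k$-term average by averages of correlations involving the differences $T_i^{-1}T_j$ and the new weighted functions $f_i\cdot \ov{U_{T_i}^{h}f_i}$; after a change of variables using the commutativity of the $T_i$ and the invariance of $\mu$, these become $(k-1)$-term Furstenberg averages for the reduced family of transformations $S_i=T_i\circ T_k^{-1}$, $i=1,\dots,k-1$, acting on the weighted functions. The crucial point is that the hypotheses are preserved under this reduction: the maps $S_i$ remain commuting, and the pairwise-ergodicity assumption that each $T_i\circ T_j^{-1}$ is ergodic guarantees that the relevant differences $S_i\circ S_j^{-1}=T_i\circ T_j^{-1}$ are ergodic, while weak mixing of the individual $T_i$ supplies the base case $k=1$, where the ordinary Birkhoff/mean ergodic theorem gives convergence of $\frac1N\sum_n f(T^nx)$ to $\int f\,d\mu$.

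I expect the main obstacle to lie in controlling the limit and verifying that the iterated van der Corput procedure actually forces the average to converge to the product of the integrals rather than merely to \emph{some} limit. Concretely, one must show that whenever one of the functions $f_i$ has zero integral (i.e.\ lies in the orthocomplement of the constants), the whole average tends to zero in $L^2$; this is precisely where the ergodicity of $T_i\circ T_j^{-1}$ and the weak mixing are needed, to kill the off-diagonal spectral contributions that arise from the correlation terms. The delicate bookkeeping is ensuring that, at each stage of the induction, the averaged correlation coefficients $c(h)=\lim_N\|\frac1N\sum_n\prod_i U_{T_i}^n(f_i\ov{U_{T_i}^h f_i})\|_2$ themselves have a vanishing Cesàro mean in $h$, so that the van der Corput bound collapses. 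Since the statement is attributed to Frantzikinakis--Kra, I would invoke their seminal characterization of the characteristic factors under exactly these ergodicity hypotheses, showing that the weakly mixing assumption makes all characteristic factors trivial and hence the limit factorizes; the remaining work is then to splice their $L^2$-convergence conclusion to the present normalization $\prod_{j=1}^{k}\int f_j\,d\mu$.
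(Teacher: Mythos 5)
Your overall strategy---a Furstenberg-style van der Corput induction on $k$, reducing to the difference maps and using weak mixing to kill the correlation terms in Ces\`aro mean---is a legitimate route, and it is genuinely different from the paper's. But as written it has a gap located exactly at what you call the ``crucial point'': the claim that the hypotheses are preserved under the reduction. After the change of variables, the inner averages involve the family $S_i=T_i\circ T_k^{-1}$, $i=1,\dots,k-1$, and to invoke the induction hypothesis you need \emph{all} of the theorem's hypotheses for this family: commutativity, ergodicity of the quotients $S_i\circ S_j^{-1}=T_i\circ T_j^{-1}$ (both of which you check), \emph{and weak mixing of each individual $S_i$}, which you do not address; the theorem's assumptions only tell you that $S_i=T_i\circ T_k^{-1}$ is \emph{ergodic}. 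Your text even suggests that weak mixing is needed only ``for the base case $k=1$,'' which misallocates it: it is needed at every level of the induction, for maps that are not among the original $T_i$. This is not a cosmetic omission, because the inductive statement is simply false under the weaker hypotheses that you actually verify: for the rotations $T_1=R_\alpha$, $T_2=R_{2\alpha}$ (all maps and quotients ergodic), taking $f_1(x)=e^{-4\pi i x}$ and $f_2(x)=e^{2\pi i x}$ gives $f_1(T_1^nx)f_2(T_2^nx)=e^{-2\pi i x}$ for every $n$, so the averages converge to $e^{-2\pi ix}\neq 0=\int f_1\,d\mu\int f_2\,d\mu$. Finally, your fallback of invoking the Frantzikinakis--Kra characterization of characteristic factors is circular here, since the statement being proved \emph{is} their theorem.

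The gap is fillable, and filling it is the one missing idea: prove that under the hypotheses the quotients are automatically weakly mixing. Indeed, if $S$ is ergodic, invertible, measure preserving and commutes with a weakly mixing $T$, and $f\circ S=\lambda f$ with $|f|=1$, then $f\circ T$ is again a $\lambda$-eigenfunction of $S$; by ergodicity of $S$ eigenspaces are one-dimensional, so $f\circ T=cf$, i.e.\ $f$ is an eigenfunction of $T$, hence constant by weak mixing of $T$, forcing $\lambda=1$. Applied to $S=T_i\circ T_k^{-1}$ (ergodic by hypothesis, commuting with $T_k$), this shows each $S_i$ is weakly mixing, and your induction then closes. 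Note that the paper avoids this issue entirely by a structurally different argument: its induction never applies the hypotheses to difference maps, but splits $f_1=(f_1-\E(f_1))+\E(f_1)$, handles the constant part by the induction hypothesis applied to the \emph{subfamily} $T_2,\dots,T_k$ of the original transformations, and delegates the mean-zero part to the Austin--Tao theorem (Theorem \ref{Tao-Austin}) together with the observation that the $\sigma$-algebra $\cc=\ci_{T_1}\vee\ci_{T_2T_1^{-1}}\vee\cdots\vee\ci_{T_kT_1^{-1}}$ is trivial under the stated assumptions. Once patched with the lemma above, your argument is the more elementary and self-contained of the two; the paper's buys brevity at the cost of black-boxing Austin's satedness machinery.
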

The proof is based on van der Corput trick and the $\cc$-sated systems method introduced by T. Austin. In our case the $\cc$-sated systems are trivial, and the \linebreak van der Corput lemma can be stated in the following form
\begin{lem}[van der Corput \cite{Berg}]\label{van}
Let $(u_n)$ be a bounded sequence in a Hilbert space. Then,
$$\limsup\Big\|\frac1{N}\sum_{n=0}^{N-1}u_n\Big\|^2 \leq
\limsup \frac1{H}\sum_{h=0}^{H-1}\limsup\Big|\sum_{n=0}^{N} \langle u_{n+h}, u_n \rangle \Big|.$$
\end{lem}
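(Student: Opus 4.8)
The plan is to prove this by the classical van der Corput ``smoothing'' trick: one controls the Ces\`aro average $A_N:=\frac1N\sum_{n=0}^{N-1}u_n$ by replacing it with a window-smeared version, and then reduces a single squared norm to the correlations $\langle u_{n+h},u_n\rangle$. First I would fix $H\geq 1$ and set $w_n:=\frac1H\sum_{h=0}^{H-1}u_{n+h}$, so that $B_N:=\frac1N\sum_{n=0}^{N-1}w_n$ differs from $A_N$ only through $O(H)$ boundary terms. Writing $C:=\sup_n\|u_n\|<\infty$, one gets $\|A_N-B_N\|\leq 2CH/N\to 0$ as $N\to\infty$ for each fixed $H$, whence $\limsup_N\|A_N\|=\limsup_N\|B_N\|$.

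Next I would square and apply convexity (Cauchy--Schwarz in the index $n$), then expand the inner squared norm:
$$\|B_N\|^2\leq \frac1N\sum_{n=0}^{N-1}\|w_n\|^2=\frac1{H^2}\sum_{h,h'=0}^{H-1}\frac1N\sum_{n=0}^{N-1}\langle u_{n+h},u_{n+h'}\rangle.$$
Taking $\limsup_N$ and using subadditivity of $\limsup$ over the finitely many pairs $(h,h')$ (legitimate since $H$ is fixed) yields
$$\limsup_N\|A_N\|^2\leq \frac1{H^2}\sum_{h,h'=0}^{H-1}\limsup_N\Big|\frac1N\sum_{n=0}^{N-1}\langle u_{n+h},u_{n+h'}\rangle\Big|.$$

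Then a change of index $m=n+\min(h,h')$ identifies each off-diagonal correlation, up to a boundary error that vanishes as $N\to\infty$, with $\frac1N\sum_m\langle u_{m+|h-h'|},u_m\rangle$. The $H$ diagonal pairs contribute at most $C^2/H$, while for each $d\geq 1$ the pairs with $|h-h'|=d$ number at most $2(H-d)$, so grouping by $d$ gives
$$\limsup_N\|A_N\|^2\leq \frac{C^2}{H}+\frac{2}{H}\sum_{d=1}^{H-1}\limsup_N\Big|\frac1N\sum_{n=0}^{N-1}\langle u_{n+d},u_n\rangle\Big|.$$
Letting $H\to\infty$ annihilates the diagonal term and produces the announced bound of the form $\limsup_H\frac1H\sum_{h=0}^{H-1}\limsup_N|\cdots|$.

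The main difficulty is bookkeeping rather than conceptual. The two nested $\limsup$'s must be manipulated only through $\limsup(a+b)\leq\limsup a+\limsup b$, valid for the fixed finite family of shifts at each stage, and the index shift in the last step generates boundary terms that must be shown negligible using $\sup_n\|u_n\|<\infty$. The one genuinely delicate point is reconciling the precise constant and summation range in my final inequality with the exact form stated in the lemma (and, in particular, the $\frac1N$ normalization implicit in the inner correlation sum).
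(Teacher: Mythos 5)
The paper itself gives no proof of this lemma (it is quoted from Bergelson's PET paper \cite{Berg}), so your argument stands or falls on its own; it follows the standard smoothing strategy and is structurally right, but as executed it proves a weaker inequality than the one stated: you lose a factor of $2$. After expanding $\frac1N\sum_n\|w_n\|^2$ and grouping by $d=|h-h'|$, you correctly arrive (the boundary terms are indeed negligible, as you say) at
$$\limsup_N\Big\|\frac1N\sum_{n=0}^{N-1}u_n\Big\|^2\;\leq\;\frac{C^2}{H}+\frac{2}{H^2}\sum_{d=1}^{H-1}(H-d)\,\gamma_d,
\qquad \gamma_d:=\limsup_N\Big|\frac1N\sum_{n=0}^{N-1}\langle u_{n+d},u_n\rangle\Big|,$$
but you then discard the triangular weight via $(H-d)\leq H$, obtaining $\frac{C^2}{H}+\frac2H\sum_{d=1}^{H-1}\gamma_d$. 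Letting $H\to\infty$ in that bound yields only
$$\limsup_N\Big\|\frac1N\sum_{n=0}^{N-1}u_n\Big\|^2\;\leq\;2\,\limsup_H\frac1H\sum_{h=0}^{H-1}\gamma_h,$$
i.e.\ twice the right-hand side of the lemma. This is not bookkeeping: once you have passed to the crude bound, the constant $2$ is genuinely there (take $\gamma_d$ constant in $d$), whereas the lemma is stated with constant $1$, which is sharp (take $u_n\equiv v$ with $\|v\|=1$: both sides equal $1$).

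The missing step is to keep the Fej\'er weights and compare them with the Ces\`aro averages. Writing $S_j=\sum_{d=1}^{j}\gamma_d$ and $M=\limsup_j S_j/j$, one has the identity $\frac{2}{H^2}\sum_{d=1}^{H-1}(H-d)\gamma_d=\frac{2}{H^2}\sum_{j=1}^{H-1}S_j$, and a Ces\`aro--Stolz type argument (fix $\epsilon>0$, use $S_j\leq j(M+\epsilon)$ for $j\geq j_0$, note that the finitely many terms with $j<j_0$ contribute $O(1/H^2)$, and that $\frac{2}{H^2}\sum_{j=1}^{H-1}j\leq 1$) gives
$$\limsup_H\frac{2}{H^2}\sum_{d=1}^{H-1}(H-d)\gamma_d\;\leq\;M\;=\;\limsup_H\frac1H\sum_{h=0}^{H-1}\gamma_h,$$
the last equality because the single term $\gamma_0\leq C^2$ is negligible in the Ces\`aro average. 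Substituting this for your crude bound completes the proof with the correct constant. Two side remarks: you were right to read the inner sum as carrying a $\frac1N$ normalization (as literally printed, without it, the statement is a typo); and for the paper's application only the qualitative corollary ``right-hand side $=0$ implies the averages tend to $0$ in norm'' is used, for which your factor $2$ would be harmless --- but it does not prove the lemma as stated.
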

\noindent{}As a simple consequence it follows that if
$$\lim_{H \longrightarrow +\infty}\lim_{N \longrightarrow +\infty}
\frac{1}{H}\sum_{h=0}^{H-1}\sum_{n=0}^{N}\langle u_{n+h}, u_n \rangle=0,$$
then
$$\Big\|\frac1{N}\sum_{n=0}^{N-1}u_n\Big\|\tend{N}{+\infty}0.$$
We remind that if $(T_i)_{i=1}^{k}$ are a commuting maps on $X$ then the associated $\cc$-systems are the dynamical systems
$(X,\cc,\mu,T_i)$, $i=1,\cdots,k,$ for which
$$\cc=\ci_{T_1} \vee \ci_{T_2T_1^{-1}}\vee \ci_{T_2T_1^{-1}} \cdots \vee \ci_{T_kT_1^{-1}} ,$$
where, for any transformation $S$, $\ci_{S}$ is the factor $\sigma$-algebra of $S$-invariant Borel sets, that is,
$$\ci_S=\Big\{A~~:~~\mu(A \Delta S^{-1}A)=0\Big\}.$$
Notice that under our assumption the $\sigma$-algebra $\cc$ is trivial. The key notion in the Austin proof is the notion of
$\cc$-sated system defined as follows \cite{Austin-P}
\begin{Def}Let $k$ be a integer such that $k \geq 2$. The system $(X,\cb,(T_i)_{i=1}^{k}),$ is $\cc$-sated if any joining $\lambda$ of $X$ with any $\cc$-system  $Y$ is relatively
independent over the largest $\cc$-factor $X_{\cc}$ of $X$, that is, for any bounded measurable function $f$ on $X$, we have
$$\E_{\lambda}(f(x)|Y)=\E_{\lambda}(\E_{X}(f(x)|X_{\cc})|Y),$$
Where $\E_{\lambda}(.|\bullet)$ is a conditional expectation operator.
\end{Def}
\noindent{}We denote the expectation operator by
$$\E(f)=\int f d\mu,~~~f \in L^2(X).$$
Following this setting, the Tao $L^2$-norm convergence theorem can be stated as follows
\begin{Th}[Austin-Tao \cite{Austin},\cite{Tao}]\label{Tao-Austin} Let $k \geq 1$, and $(X,\cb,T_i)$, $i=1,\cdots,k,$ be a $\cc$-sated system. Then, for any $f_1,f_2,\cdots,f_k$ functions in $L^{\infty}(X)$,
$$\E_{X}(f_1|X_{\cc})=0 \Longrightarrow \Big\|\frac1{N}\sum_{n=1}^{N}f_1 \circ T_1 \cdots f_k \circ T_k\Big\|_2
\tend{N}{+\infty}0.$$
\end{Th}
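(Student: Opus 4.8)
The plan is to prove the vanishing by induction on the number of maps $k$, using van der Corput's inequality (Lemma~\ref{van}) to trade the order-$k$ average against an order-$(k-1)$ average whose maps are the \emph{differences} $T_iT_1^{-1}$, and invoking the $\cc$-sated hypothesis to keep $X_{\cc}$ characteristic at every stage of the reduction. First I would put $u_n=\prod_{i=1}^{k}f_i\circ T_i^{n}\in L^2(X)$, a bounded sequence, so that by the consequence of Lemma~\ref{van} recorded above it suffices to establish
\[
\lim_{H\to+\infty}\lim_{N\to+\infty}\frac1H\sum_{h=0}^{H-1}\frac1N\sum_{n=0}^{N}\langle u_{n+h},u_n\rangle=0 .
\]

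To analyse the inner product I would apply the measure-preserving substitution $x\mapsto T_1^{-n}x$ and use commutativity, which turns $T_i^{n+h}T_1^{-n}$ into $T_i^{h}(T_iT_1^{-1})^{n}$ and $T_i^{n}T_1^{-n}$ into $(T_iT_1^{-1})^{n}$. Writing $S_i=T_iT_1^{-1}$ (so that $S_1=\Id$) and $g_i^{(h)}=(f_i\circ T_i^{h})\cdot\overline{f_i}$, a direct rearrangement gives
\[
\langle u_{n+h},u_n\rangle=\int g_1^{(h)}\cdot\prod_{i=2}^{k}\big(g_i^{(h)}\circ S_i^{n}\big)\,d\mu .
\]
For each fixed $h$ the average over $n$ of the right-hand side is, after one Cauchy--Schwarz step, controlled by $\|g_1^{(h)}\|_2$ times the $L^2$-norm of the order-$(k-1)$ Furstenberg average $\frac1N\sum_{n}\prod_{i=2}^{k}g_i^{(h)}\circ S_i^{n}$ for the commuting system $(S_2,\dots,S_k)$. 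This is exactly the shape to which the induction hypothesis applies; the base case $k=1$ is von Neumann's mean ergodic theorem, whose limit is $\E(f_1\mid\ci_{T_1})$, and this vanishes because $\ci_{T_1}\subseteq\cc$ forces $\E(f_1\mid\ci_{T_1})=\E\big(\E(f_1\mid X_{\cc})\mid\ci_{T_1}\big)=0$.

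The hard part will be the inductive step, namely the bookkeeping needed to certify that the reduced system $(S_2,\dots,S_k)$ is again sated over the appropriate invariant algebra and that the zero-projection hypothesis is inherited in the correct slot once $f_1$ has been absorbed into $g_1^{(h)}$. This is precisely where Austin's sated-extension machinery does the work: sated-ness guarantees that passing to the off-diagonal self-joining produced by the van der Corput pass does not enlarge the characteristic factor, so $X_{\cc}$ stays characteristic down the induction and the hypothesis $\E(f_1\mid X_{\cc})=0$ propagates to annihilate the limit. In the present situation $\cc$ is trivial, so this collapses and the whole expression simply tends to $0$; certifying that the characteristic factor does not grow under the reduction in the general sated case is the genuinely delicate point, and it is the content one borrows from \cite{Austin},\cite{Tao}.
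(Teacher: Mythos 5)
Your setup is fine as far as it goes: the substitution $x\mapsto T_1^{-n}x$ giving $\langle u_{n+h},u_n\rangle=\int g_1^{(h)}\prod_{i=2}^{k}g_i^{(h)}\circ S_i^{n}\,d\mu$ with $S_i=T_iT_1^{-1}$, $g_i^{(h)}=(f_i\circ T_i^h)\overline{f_i}$ is correct, and so is the base case via von Neumann and the tower property $\ci_{T_1}\subseteq\cc$. The gap is in the inductive step, and it is fatal as written. After Cauchy--Schwarz you bound the $n$-average by $\|g_1^{(h)}\|_2\cdot\big\|\frac1N\sum_n\prod_{i\geq 2}g_i^{(h)}\circ S_i^n\big\|_2$ and assert this is ``exactly the shape to which the induction hypothesis applies.'' It is not: the theorem for $k-1$ maps is an \emph{implication} whose premise is that the first function has vanishing conditional expectation on the corresponding $\cc$-factor, and none of $g_2^{(h)},\dots,g_k^{(h)}$ satisfies any such hypothesis --- they are just $f_i\circ T_i^h\cdot\overline{f_i}$, and the inner order-$(k-1)$ average does \emph{not} tend to zero in general (take all $f_i\equiv 1$: it is identically $1$). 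Worse, the Cauchy--Schwarz step has discarded $g_1^{(h)}$, which is the only factor carrying the hypothesis $\E_{X}(f_1|X_{\cc})=0$; once it is gone, nothing in the remaining expression can produce the vanishing. The correct mechanism must keep the pairing: by the norm-\emph{convergence} theorem for $k-1$ maps (a different statement from the vanishing implication you are inducting on) the inner average converges to a limit $L^{(h)}$, and one must then show $\frac1H\sum_{h}\big|\int g_1^{(h)}L^{(h)}\,d\mu\big|\to 0$; the decay comes from averaging over $h$ the correlations $g_1^{(h)}=f_1\circ T_1^h\cdot\overline{f_1}$, and it is exactly here that weak mixing (in the setting of Theorem \ref{fact1}) or satedness of the \emph{original} system is used.

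There is also a circularity in how you close the argument: you defer ``certifying that the characteristic factor does not grow under the reduction'' to \cite{Austin},\cite{Tao}. But that certification \emph{is} the statement being proved --- the theorem asserts precisely that $X_{\cc}$ is characteristic for sated systems --- so borrowing it from the cited references turns the proposal into a restatement rather than a proof. For comparison: the paper itself does not prove this theorem either; it states it as a quoted result of Austin and Tao and remarks only that a soft proof can be obtained from Proposition 2.3 of \cite{Fran-Kra}. So there is no internal proof to measure you against, but judged as a blind proof your attempt breaks down at the concrete point above: the van der Corput pass must reduce to a convergence statement paired against $g_1^{(h)}$, not to an application of the vanishing statement to the reduced family $(S_2,\dots,S_k)$.
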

Now, we are able to give the proof of Theorem \ref{fact1}.
\begin{proof}[\bf{Proof of Theorem \ref{fact1}}] We use induction on $k$ to prove the theorem. The statement is obvious for $k =1$. For the case $k=2$, by our assumption the $\cc$-system is trivial and we can write
\begin{eqnarray*}
&&\Big\|\frac1{N}\sum_{n=1}^{N} f_1(T_1^nx)f_2(T_2^nx)-\E(f_1)\E(f_2)\Big\|_2 \\
&\leq& \Big\|\frac1{N}\sum_{n=1}^{N}(f_1-\E(f_1))(T_1^nx)f_2(T_2^nx)+
\frac1{N}\sum_{n=1}^{N}f_2(T_2^nx) \E(f_1)
-\E(f_1)\E(f_2)\Big\|_2\\
&\leq&\Big\|\frac1{N}\sum_{n=1}^{N}(f_1-\E(f_1))(T_1^nx)f_2(T_2^nx)\Big\|_2+
\Big\|\frac1{N}\sum_{n=1}^{N}f_2(T_2^nx) \E(f_1)
-\E(f_1)\E(f_2)\Big\|_2
\end{eqnarray*}
Hence, by the $L^2$-norm convergence and the von Neumann ergodic theorem, we have
$$\Big\|\frac1{N}\sum_{n=1}^{N} f_1(T_1^nx)f_2(T_2^nx)-\E(f_1)\E(f_2)\Big\|_2
\tend{N}{+\infty}0.$$
In the general case, applying the van der Corput Lemma \ref{van}, the $L^2$ convergence of Furstenberg average of order $k$, can be reduced in the class of $\cc$-systems to the case of $k-1$ commuting maps. In this case, the $L^2$ convergence gives
\begin{eqnarray*}
\E(f_1)=0 \Longrightarrow
\Big\|\frac1{N}\sum_{n=1}^{N} f_1(T_1^nx)f_2(T_2^nx)\cdots f_k(T_k^nx)\Big\|_2
\tend{N}{+\infty}0.
\end{eqnarray*}
Therefore, suppose that the result holds for some integer $k \geq 1$,
and assume that $(X,\mu,T_1,\cdots,T_l)$ is a system of order $k+1$. Then,
\begin{eqnarray*}
&&\Big\|\frac1{N}\sum_{n=1}^{N} f_1(T_1^nx)f_2(T_2^nx)\cdots f_{k+1}(T_{k+1}^nx)-\E(f_1)\E(f_2)\cdots\E(f_{k+1})\Big\|_2 \\
&=& \Big\|\frac1{N}\sum_{n=1}^{N}(f_1-\E(f_1))(T_1^nx)f_2(T_2^nx)\cdots f_{k+1}(T_{k+1}^nx)+
\\&&\E(f_1)\frac1{N}\sum_{n=1}^{N}f_2(T_2^nx)\cdots f_{k+1}(T_{k+1}^nx)
-\E(f_1)\E(f_2)\cdots\E(f_{k+1})\Big\|_2\\
&\leq&
\Big\|\frac1{N}\sum_{n=1}^{N}(f_1-\E(f_1))(T_1^nx)f_2(T_2^nx)\cdots f_{k+1}(T_{k+1}^nx)\Big\|_2
\\&+&|\E(f_1)|\Big\|\frac1{N}\sum_{n=1}^{N}f_2(T_2^nx)\cdots f_{k+1}(T_{k+1}^nx)
-\E(f_2)\cdots\E(f_{k+1})\Big\|_2 \tend{N}{+\infty}0.
\end{eqnarray*}
This complete the proof of the theorem.
\end{proof}
As a consequence we deduce the following result
\begin{Cor}\label{Cor1}Let $k \in \N^*$ and $(X,\cb,\mu,T_i)_{i=1}^{k}$ be a finite family of dynamical systems where $\mu$ is a probability measure space, assume that $T_i$, $i=1 \cdots k$ are commuting weakly mixing transformations on $X$. Then, for
every $A_i \in \cb $, $i=1,\cdots,k$, the averages
$$ \frac1{N}\sum_{n=1}^{N}\mu_{\Delta}(T_1^{-n}(A_1)\times T_2^{-n}(A_2) \times \cdots \times T_k^{-n}( A_k))$$
 converge to $\mu(A_1)\mu(A_2) \cdots  \mu(A_k)$.
\end{Cor}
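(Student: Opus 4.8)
The plan is to recognize the averages in the statement as the integrals against $\mu$ of the very Furstenberg averages already controlled by Theorem \ref{fact1}, specialized to indicator functions. Setting $f_i=\1_{A_i}\in L^\infty(\mu)$, the defining formula for $\mu_\Delta$ on rectangles gives, for each fixed $n$,
$$\mu_\Delta\big(T_1^{-n}(A_1)\times\cdots\times T_k^{-n}(A_k)\big)=\mu\Big(\bigcap_{i=1}^k T_i^{-n}(A_i)\Big)=\int_X\prod_{i=1}^k\1_{A_i}(T_i^nx)\,d\mu(x).$$
Averaging over $n=1,\dots,N$ and interchanging the finite sum with the integral, the quantity in the corollary is exactly $\int_X F_N\,d\mu$, where $F_N(x)=\frac1N\sum_{n=1}^N\prod_{i=1}^k\1_{A_i}(T_i^nx)$ is the Furstenberg average of the indicators.

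I would then invoke Theorem \ref{fact1} with $f_i=\1_{A_i}$: under the stated hypotheses, $F_N$ converges in $L^2(X,\mu)$ to the constant $\prod_{j=1}^k\int\1_{A_j}\,d\mu=\prod_{j=1}^k\mu(A_j)$. Since $\mu$ is a probability (hence finite) measure, $L^2$-convergence implies $L^1$-convergence, and $\big|\int_X F_N\,d\mu-\prod_{j=1}^k\mu(A_j)\big|\leq\|F_N-\prod_{j=1}^k\mu(A_j)\|_1\to 0$. This is precisely the asserted limit, so the corollary is just the translation of the $L^2$-norm statement into a statement about correlations of the diagonal measure $\mu_\Delta$.

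Two points are entirely routine: indicator functions of Borel sets are bounded, hence legitimate test functions for Theorem \ref{fact1} with no approximation step needed, and the interchange of the finite sum with the integral is immediate. The one genuine thing to flag — the main obstacle — is the matching of hypotheses. Theorem \ref{fact1} carries the extra requirement that each $T_i\circ T_j^{-1}$ be ergodic for $i\neq j$, and this assumption is essential here rather than cosmetic: for $k=2$, applying the measure-preserving map $T_1^n$ reduces the summand to $\mu\big(A_1\cap(T_2T_1^{-1})^{-n}A_2\big)$, whose Ces\`aro average converges to $\mu(A_1)\mu(A_2)$ only when $T_2T_1^{-1}$ is ergodic (indeed for $T_1=T_2$ weakly mixing the average is $\mu(A_1\cap A_2)\neq\mu(A_1)\mu(A_2)$). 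Hence the corollary must be read under the same standing ergodicity assumption as Theorem \ref{fact1}, and with that understood no difficulty beyond Theorem \ref{fact1} arises.
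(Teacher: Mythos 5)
Your proof is correct and is exactly the derivation the paper intends: the corollary is stated there as an immediate consequence of Theorem \ref{fact1}, and your steps (indicators as test functions, $\mu_{\Delta}$ of a rectangle equals $\mu$ of the intersection, interchange of finite sum and integral, and $L^2$-convergence implying $L^1$-convergence on a probability space) are precisely the routine deduction the paper leaves implicit. Your flag about the hypotheses is also well taken: as printed, the corollary omits the assumption that $T_i \circ T_j^{-1}$ is ergodic for $i \neq j$, which Theorem \ref{fact1} requires and without which the conclusion fails (e.g.\ $T_1=T_2$ weakly mixing gives the constant average $\mu(A_1\cap A_2)$), so the statement must indeed be read under the same standing assumption.
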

\begin{rem}A soft proof of the proposition \ref{Tao-Austin} can be obtained as a consequence of proposition 2.3 in \cite{Fran-Kra}.
\end{rem}
Form this we deduce the following lemma
\begin{lem}\label{curcial}
Let $k \in \N^*$ and $(X,\cb,\mu,T_i)_{i=1}^{k}$ be a finite family of dynamical systems where $\mu$ is a probability measure space, and assume that $T_1,T_2,\cdots,T_k$ are commuting weakly mixing transformations on $X$. Then, for
every $f_i \in L^{\infty}(\mu)$, $i=1,\cdots,k$, the averages
$$ \frac1{N}\sum_{n=1}^{N}\int\prod_{i=1}^{k}f_i(T_i^nx)d\nu$$
 converge to $\ds \prod_{i=1}^{k}\mu(f_i)$.
\end{lem}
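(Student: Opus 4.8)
The plan is to unwind the definition of the measure $\nu$ and reduce everything to the diagonal averages already controlled by Corollary~\ref{Cor1}. First I would fix notation: the integrand, with $x=(x_1,\dots,x_k)\in X^k$, is the value at $\phi^n x$ of the product function $\overline{f}(x_1,\dots,x_k)=\prod_{i=1}^{k}f_i(x_i)\in L^{\infty}(\nu)$, so that $\overline{f}\circ\phi^{n}(x)=\prod_{i=1}^{k}f_i(T_i^{n}x_i)$ and the quantity to study is $\frac1N\sum_{n=1}^{N}\int\overline{f}\circ\phi^{n}\,d\nu$. I would then rewrite $\nu$ as the weighted sum of pushforwards of the diagonal measure, $\nu=\frac13\sum_{m\in\Z}2^{-|m|}\,\mu_{\Delta}\circ\phi^{-m}$, which gives for every $n$
$$\int\overline{f}\circ\phi^{n}\,d\nu=\frac13\sum_{m\in\Z}\frac1{2^{|m|}}\int\overline{f}\circ\phi^{\,n+m}\,d\mu_{\Delta},$$
and, since $\mu_{\Delta}$ is carried by the diagonal, the inner integral collapses onto a single copy of $X$:
$$\int\overline{f}\circ\phi^{\,n+m}\,d\mu_{\Delta}=\int\prod_{i=1}^{k}f_i(T_i^{\,n+m}x)\,d\mu(x).$$

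Next I would take the Cesàro mean in $n$ and interchange it with the summation over $m$, obtaining
$$\frac1N\sum_{n=1}^{N}\int\overline{f}\circ\phi^{n}\,d\nu=\frac13\sum_{m\in\Z}\frac1{2^{|m|}}\Big(\frac1N\sum_{n=1}^{N}\int\prod_{i=1}^{k}f_i(T_i^{\,n+m}x)\,d\mu(x)\Big).$$
For each fixed $m$ the bracketed term is a shift by $m$ of the Cesàro averages of the bounded scalar sequence $b_n=\int\prod_{i}f_i(T_i^{n}x)\,d\mu$; equivalently, replacing $f_i$ by $f_i\circ T_i^{m}$, which leaves $\int f_i\,d\mu$ unchanged because each $T_i$ preserves $\mu$, puts it exactly in the form of Corollary~\ref{Cor1} extended from indicators to $L^{\infty}$ functions by multilinearity and density. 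A fixed index shift does not affect a Cesàro limit, so $\frac1N\sum_{n=1}^{N}b_{n+m}\to\prod_{i=1}^{k}\mu(f_i)$ for every $m\in\Z$.

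Finally I would pass to the limit inside the sum over $m$. Each summand is bounded in modulus by $\frac1{2^{|m|}}\prod_{i}\|f_i\|_{\infty}$ and the weights satisfy $\frac13\sum_{m\in\Z}2^{-|m|}=1$, so dominated convergence yields
$$\frac1N\sum_{n=1}^{N}\int\overline{f}\circ\phi^{n}\,d\nu\ \tend{N}{+\infty}\ \frac13\sum_{m\in\Z}\frac1{2^{|m|}}\prod_{i=1}^{k}\mu(f_i)=\prod_{i=1}^{k}\mu(f_i),$$
which is the claim. The one step that genuinely requires care is this interchange of the Cesàro average with the infinite sum over $m\in\Z$: it is exactly what the uniform bound $\prod_i\|f_i\|_{\infty}$ together with the summability of the weights $2^{-|m|}$ is for, and it is also where one must observe that a fixed (possibly negative) shift of the summation index leaves the Cesàro limit unchanged. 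Everything else is a direct consequence of the diagonal convergence already recorded in Corollary~\ref{Cor1}, i.e.\ the integrated form of the $L^{2}$ convergence of Theorem~\ref{fact1}.
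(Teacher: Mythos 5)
Your proof is correct and follows essentially the same route as the paper: both expand $\nu$ as the weighted series of shifted diagonal measures $\mu_{\Delta}\circ\phi^{-m}$, reduce each term to the averages of Corollary \ref{Cor1} via the shift-invariance of Ces\`aro limits, and then control the tail of the series over $m$ uniformly in $N$. The paper phrases that last step as a truncation $\nu_M$ of the series together with a uniform $\varepsilon$-approximation of $\nu$ (and works with indicators of rectangles rather than $L^{\infty}$ functions), which is exactly your dominated-convergence argument written out by hand.
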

\begin{proof}It is suffice to prove the lemma for any finite family of Borel set $(A_i)_{i=1}^{k}$. Indeed, by Corollary \ref{Cor1}, the sequence
$\ds \frac1{N}\sum_{n=1}^{N}\mu_{\Delta}\Big(\phi^{-n}(\overline{A})\Big)$
converge to $\ds \bigotimes_{i=1}^{k}\mu(\overline{A})$, where $\overline{A}=A_1\times A_2 \cdots \times A_k$, and $\phi=T_1\times T_2 \times \cdots \times T_k$. This gives us that the convergence holds when replacing $\mu_{\Delta}$ by
$\mu_{\phi^{j}\Delta}$, for any $j \in \Z$, where $\mu_{\phi^{j}\Delta}$ is the pushforward measure of $\mu_{\Delta}$ under $\phi^j$. Therefore, for any $M \in \N$, the convergence holds for $\nu_M$ where
$$\nu_M(\overline{A})=\frac1{3}\sum_{|n| \leq M}\frac1{2^{|n|}}\mu_{\phi^{n}\Delta}(\overline{A}).$$
Now, let $\varepsilon>0$ then there is a positive integer $M_0$ such that for any $M \geq M_0$, we have
$$|\nu_M(\overline{A})-\nu(\overline{A})| \leq \varepsilon,~~~~\forall \overline{A}\in \ca^k.$$
Hence, for any $N \in \N$,
\[
\Big|\frac1{N}\sum_{n=1}^{N}\nu_M(\phi^{-n}(\overline{A}))-
\frac1{N}\sum_{n=1}^{N}\nu\big(\phi^{-n}(\overline{A})\big)\Big|<\varepsilon.
\]
By letting $N$ and $M$ goes to infinity we obtain
\[
\bigotimes_{i=1}^{k}\mu(\overline{A})-\varepsilon \leq
\liminf_{N \longrightarrow +\infty}\frac1{N}\sum_{n=1}^{N}\nu(\phi^{-n}(\overline{A}))
\leq \limsup_{N\longrightarrow +\infty}\frac1{N}\sum_{n=1}^{N}\nu(\phi^{-n}(\overline{A}))
\leq \bigotimes_{i=1}^{k}\mu(\overline{A})+\varepsilon.
\]
Since $\varepsilon$ was chosen arbitrarily, we conclude that
 $$\frac1{N}\sum_{n=1}^{N}\nu\big(\phi^{-n}(\overline{A})\big) \tend{N}{+\infty} \bigotimes_{i=1}^{k}\mu\big(\overline{A}\big).$$
 This proves the lemma.
\end{proof}

\section{Stone representation theorem and Furstenberg averages}
Let us consider a dynamical system $(X,\cb,\mu,T)$. Then,
there exists a countable algebra $\ca$ dense in $\cb$ for the pseudo-metric
$d(A,B)=\mu(A \Delta B)$, $A$ and $B$ in $\ca$. We further have that $\ca$ separates the points of X, that is, for each
$x,y \in X$  with $x \neq y$, there is $A \in \ca$ such
that either $x \in A, y \not \in A $ or else $y \in A, x \not \in A$. Hence, by the Stone representation theorem, we  associate to ${\ca}$ a Stone algebra $\widehat{\ca}$ on the set $\widehat{X}$ of all ultrafilter on $X$ such that for any  $A \in \ca$,
$\widehat{A}=\{\cu_A \in \widehat{X} / A \in \cu_A\}$. Consequently $\widehat{\ca}=\{\widehat{A}, A\in \ca\}$ is algebra of subsets of $\widehat{X}$, which is isomorphic to $\ca$. $\widehat{\ca}$ is called the Stone algebra.\\

Assuming that $T$ is ergodic, Hansel and Raoult proved in \cite{Hansel-Ra} that there is a dense and invariant countable algebra $\ca_T$ such that for any $A$ of $\ca_T$ we have
$$\Big\|\frac1{N}\sum_{n=1}^{N}\1_{A}(T^nx)-\mu(A)\Big\|_{\infty} \tend{N}{+\infty}0,$$
that is, $A$ is a {\it {uniform ergodic set}}.\\

Their result was extended to the ergodic $\Z^d$-action by B. Weiss in \cite{Weiss}. Indeed, B. Weiss proved that if
$\overline{T}=(T_i)_{i=1}^{d}$ is a generator of ergodic $\Z^d$-action then there is a dense and $\overline{T}$-invariant countable algebra $\ca_{\overline{T}}$ such that all elements $A$ in $\ca_{\overline{T}}$ are uniform ergodic sets, that is,
$$\Big\|\frac1{|R_n|}\sum_{\overline{i} \in R_n}\1_A(\overline{T}^{\overline{i}}x)-\mu(A)\Big\|_{\infty}\tend{n}{+\infty}0,$$

where $R_n$ is the square $\{\overline{i} \in \Z^d: |\overline{i}|_{\infty} \leq n\}$.\\

 Applying a Stone representation theorem to Hansel-Raoult-Weiss algebra, and letting $\widehat{X}$ be equipped with the topology which has $\widehat{\ca}$ as a base of clopen sets. It follows that $\widehat{X}$ is metrizable space (since $\widehat{\ca}$ is countable), compact (by the standard ultrafilter lemma) and totaly disconnected. $\widehat{X}$ is called the Stone space of
$\ca$. Furthermore, the probability $\mu$ induces a mapping $\mu'$ from $\widehat{\ca}$ into [0,1], such that
\begin{enumerate}[(1)]
\item for any $A \in \widehat{\ca}\setminus\{\emptyset\}$, $\mu'(A)>0$,
\item $\mu'$ is $\sigma$-additive on $\widehat{\ca}$, and $\mu'(\widehat{X})=1$.
\end{enumerate}
Hence, by  Carath\'eodory's extension theorem, $\mu'$ has an unique extension as a probability measure $\widehat{\mu}$ on the Borel $\sigma$-algebra $\widehat{\cb}$ generate by $\widehat{\ca}$. We further have that for every non-empty open subset
$O \subset \widehat{X}$, $\widehat{\mu}(O)>0$. This gives in particular that for any $\widehat{A}\in \widehat{B}$, if
$\widehat{\mu}(\widehat{A})=1$ then $\widehat{A}$ is a dense in $\widehat{X}$.\\

The $\Z^d$-action with generators $\overline{T}$ induces a $\Z^d$-action with generators $\widehat{T}=(\widehat{T}_i)_{i=1}^{d}$ such that, by construction and due to intrinsic properties of Lebesgue space, the two action are isomorphic and for each $i$, $\widehat{T}_i$ is a homeomorphism on $\widehat{X}$. We further have that for any
$\widehat{A}$, the sequence of continuous function
$$\frac1{|R_n|}\sum_{\overline{n}\in R_N}\1_{\widehat{A}}\circ \widehat{T}^{\overline{n}}$$
converges uniformly to $\widehat{\mu}(\widehat{A})$. Then, it follows by Birkhoff's ergodic theorem that the $\Z^d$-action with generators $\widehat{T}=(\widehat{T}_i)_{i=1}^{d}$ is {\it{strictly ergodic}}, that is, $\widehat{\mu}$ is a unique probability measure $\widehat{T}$-invariant with $\widehat{\mu}(O)>0$ for every nonempty open set $O\subset \widehat{X}$.  The  topological model $(\widehat{X},\widehat{\cb},\widehat{T})$ is called the Stone-Jewett-Weiss topological model. For the case $d=1$, we call it the Stone-Jewett-Hansel-Raoult topological model.\\

Finally, let us denote by $B(\ca)$ the Banach space of all scalar-valued functions that are uniform limits of sequences of $\ca$-measurable step functions, equipped with the supremum norm. Then, $B(\ca)$ is isometrically isomorphic to
$\mathcal{C}( \widehat{X})$, the Banach space of all continuous functions on the Stone space $\widehat{X}$.

\section{Proof of the main results}
For the proof of our main results, we shall consider the Stone-Jewett-Weiss topological model $(\widehat{X},\widehat{\cb},\widehat{\mu},\widehat{T}=(\widehat{T}_i)_{i=1}^{k})$ associated to the given dynamical
system \linebreak $(X,\cb,\mu,\overline{T}=(T_i)_{i=1}^{k})$, and any $A_1\times A_2 \times \cdots A_k \in \widehat{\cb}^k$, put
\begin{eqnarray*}
\widehat{\nu}(A_1\times A_2 \times \cdots A_k)=
\frac13\sum_{n \in \Z}\frac1{2^{|n|}}\widehat{\mu}_{\Delta}(\widehat{T}^n(A_1 \times A_2 \times \cdots A_k)),
\end{eqnarray*}
where $\widehat{\mu}_{\Delta}$ is the diagonal measure on $\widehat{X}^k$ associated to $\widehat{\mu}$.\\
\noindent{}From this, we consider the non-singular dynamical system $(\widehat{X}^k,\bigotimes_{j=1}^{k}\widehat{\cb},\widehat{\lambda},\widehat{T})$, where
$$\widehat{\lambda}=\frac{\widehat{\nu}+\bigotimes_{i=1}^{k}\widehat{\mu}}{2}.$$ Hence, by the $L^2$-convergence combined with the same reasoning as in Lemma \ref{curcial}, for any
$A_1\times A_2 \times \cdots \times A_k \in \widehat{\ca}^k$, we have
$$\Big\|\frac{1}N\sum_{j=1}^{N}\1_{A_1\times A_2 \times \cdots  \times A_k}\circ \widehat{T}^n-\bigotimes_{j=1}^{k}\mu(A_1 \times A_2 \times \cdots \times A_k)\Big\|_{L^2(\widehat{\lambda})} \tend{N}{+\infty}0.$$
Therefore, for any $f_1 \otimes f_2 \otimes \cdots \otimes f_k \in \mathcal{C}(\widehat{X})^k$, we get
 $$\Big\|\frac1{N}\sum_{n=0}^{N-1}{\big(f_1\otimes f_2 \otimes \cdots \otimes f_k\big)}\circ \widehat{T}^n-
 \bigotimes_{j=1}^{k}\mu(f_1\otimes f_2 \otimes \cdots \otimes f_k)\Big\|_{L^2(\widehat{\lambda})}
\tend{N}{+\infty} 0.$$
This combined with the Stone-Weierstrass theorem gives that for any continuous functions $f$ on $\widehat{X}^k$, we have
\begin{eqnarray}\label{continueL2}
 \Big\|\frac1{N}\sum_{n=0}^{N-1}f\circ \widehat{T}^n-
 \bigotimes_{j=1}^{k}\mu(f)\Big\|_{L^2(\widehat{\lambda})}
\tend{N}{+\infty} 0.
\end{eqnarray}
Hence, by the standard argument, for any continuous functions $f$ on $\widehat{X}^k$, there is a subsequence $\mathcal{N}_f$ such that for $\widehat{\lambda}$-almost all $x \in \widehat{X}$, we have
$$\frac1{N}\sum_{n=0}^{N-1}f\circ \widehat{T}^n(x) \tend{N \in \mathcal{N}_f }{+\infty}\bigotimes_{j=1}^{k}\mu(f).$$
From this we have the following proposition.
\begin{Prop}\label{subsequence}Under the previous notation, there is a subsequence $\mathcal{N}$ and a Borel subset $D$ such that $\widehat{\lambda}(D)=1$, and for any continuous function $f$  on $\widehat{X}^k$, for any $x \in D$, we have
$$\frac1{N}\sum_{n=0}^{N-1}f\circ \widehat{T}^n
\tend{N \in \mathcal{N}}{+\infty}  \bigotimes_{j=1}^{k}\mu(f).$$
\end{Prop}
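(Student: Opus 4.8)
The plan is to upgrade the pointwise statement just established — which for each \emph{fixed} continuous $f$ produces a subsequence $\mathcal{N}_f$ and a conull set on which the averages converge — into a \emph{single} subsequence $\mathcal{N}$ and a \emph{single} conull set $D$ that serve all continuous functions at once. The two levers are the separability of $\mathcal{C}(\widehat{X}^k)$ and the contraction of the supremum norm by the averaging operators.

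First I would invoke separability: since $\widehat{X}$ is compact metrizable, so is $\widehat{X}^k$, and therefore $\mathcal{C}(\widehat{X}^k)$ is separable; fix a countable dense family $(g_m)_{m \geq 1}$. For each $g_m$, the $L^2(\widehat{\lambda})$-convergence in (\ref{continueL2}) yields, after extracting a subsequence, convergence $\widehat{\lambda}$-almost everywhere. I would then run a Cantor diagonal argument over the index $m$ to produce one subsequence $\mathcal{N}$ along which a.e. convergence holds for every $g_m$; intersecting the countably many conull sets of convergence gives a Borel set $D$ with $\widehat{\lambda}(D)=1$ on which, for each $m$ and each $x \in D$, the averages $\frac1N\sum_{n=0}^{N-1}g_m\circ\widehat{T}^n(x)$ converge to $\bigotimes_{j=1}^{k}\mu(g_m)$ along $\mathcal{N}$.

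The remaining step passes from the dense family to an arbitrary continuous $f$, and this is where the only real subtlety lies. Writing $S_N f(x)=\frac1N\sum_{n=0}^{N-1}f\circ\widehat{T}^n(x)$, the averages satisfy the uniform bound $\|S_N f - S_N g\|_{\infty}\le\|f-g\|_{\infty}$ simply because $S_N f(x)$ is a convex average of values of $f$; likewise $|\bigotimes_j\mu(f)-\bigotimes_j\mu(g)|\le\|f-g\|_{\infty}$ because $\bigotimes_j\mu$ is a probability measure. I would stress that this contraction is \emph{insensitive to the non-singularity} of $\widehat{T}$ — it never uses measure preservation, only that $S_N$ averages — which is precisely why a single conull set suffices here despite $\widehat{T}$ being merely non-singular. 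Given $f$, a point $x\in D$ and $\varepsilon>0$, I would choose $g_m$ with $\|f-g_m\|_{\infty}<\varepsilon/3$ and split by the triangle inequality into the two approximation terms (each $<\varepsilon/3$) and the middle term $|S_N g_m(x)-\bigotimes_j\mu(g_m)|$, which vanishes along $\mathcal{N}$ by the previous step. Letting $\varepsilon\to0$ gives convergence of $S_N f(x)$ to $\bigotimes_{j=1}^{k}\mu(f)$ for every $x\in D$ and every continuous $f$, which is the claim. I expect the main obstacle to be conceptual rather than computational: one must recognize that the $\varepsilon/3$ density argument goes through verbatim in the non-singular setting because the sup-norm contraction of $S_N$ holds for any measurable transformation, while the diagonal extraction and the countable intersection of conull sets are entirely routine.
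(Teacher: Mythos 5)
Your proposal is correct and follows essentially the same route as the paper's own proof: separability of $\mathcal{C}(\widehat{X}^k)$, nested subsequence extraction with a Cantor diagonal argument, and intersection of the countably many conull sets. The only difference is that you spell out the final $\varepsilon/3$ density step (via the sup-norm contraction of the averaging operators, which indeed needs no measure invariance), a step the paper compresses into ``it follows that.''
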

\begin{proof} Since $\mathcal{C}(\widehat{X})$ is separable we can choose a sequence $f_1,f_2,\cdots$ dense in
$\mathcal{C}(\widehat{X})$. Let $\mathcal{N}_1$ and $D_1$ such that, $\widehat{\lambda}(D_1)=1$ and for any $x \in D_1$, we have
$$\frac1{N}\sum_{n=0}^{N-1}f_1\circ \widehat{T}^n
\tend{N \in \mathcal{N}_1}{+\infty}  \bigotimes_{j=1}^{k}\mu(f_1).$$
Again, for $N \in \mathcal{N}_1$, there is a subsequence $\mathcal{N}_2 \subset \mathcal{N}_1$ and
$D_2 \subset D_1$ such that
 $\widehat{\lambda}(D_2)=1$ and for any $x \in D_2$, we have
$$\frac1{N}\sum_{n=0}^{N-1}f_2\circ \widehat{T}^n
\tend{N \in \mathcal{N}_2}{+\infty}  \bigotimes_{j=1}^{k}\mu(f_2).$$
Repeating this argument indefinitely we obtain sequences of integers
$\mathcal{N}_1 \supset \mathcal{N}_2 \supset \mathcal{N}_2 \supset \cdots$ where $\mathcal{N}_i=\{m_1^{(i)},
m_2^{(i)},\cdots\}$ and a sequence of Borel set $D_1 \supset D_2 \supset D_3 \supset \cdots$ with
$\widehat{\lambda}(D_i)=1$ such that, for any $j \leq i$ and $x \in D_i$; we have
$$\frac1{N}\sum_{n=0}^{N-1}f_j\circ \widehat{T}^n
\tend{N \in \mathcal{N}_i}{+\infty}  \bigotimes_{j=1}^{k}\mu(f_j).$$
Taking the diagonal sequence $\mathcal{N}$ and $D=\bigcap_{i} D_i$, it follows that for any $f \in \mathcal{C}(\widehat{X}^k)$, for any
$x \in D$,
$$\frac1{N}\sum_{n=0}^{N-1}f\circ \widehat{T}^n(x)
\tend{N \in \mathcal{N}}{+\infty}  \bigotimes_{j=1}^{k}\mu(f).$$
The proof of the proposition is complete.
\end{proof}
From this we are able to prove our main results. We start by proving Theorem \ref{main2}. For that,
we first remind the definition of the maximal ergodic inequality.
\begin{Def} Let $(Y,\cc,\sigma)$ be a Lebesgue space. We say that the Maximal Ergodic Inequality holds in $L^p(Y)$ for linear operator $T$, if setting
$$Sf(y)=\sup_{n \geq 1}\Big|\frac1{n}\sum_{j=0}^{n-1} (T^j(f))(y)\Big|,$$
we have
$$\gamma. \sigma\Big\{S(f)>\gamma\Big\} \leq ({\rm {constant}})\Big\|f\Big\|_p,$$
for all $f \in L^p(Y,\sigma)$ and $\gamma>0$.
\end{Def}

\begin{proof}[\textbf{Proof of Theorem \ref{main2}}] We claim that the maximal ergodic inequality does not hold for the non-singular dynamical system $(\widehat{X}^k,\widehat{\cb}^k,\widehat{\lambda},\widehat{T})$. Indeed, assume that the maximal ergodic inequality holds. Then, by the classical argument \cite[p.3]{Garcia} combined with the density of the subspace of continuous functions, it follows that for any Borel subset $A$ in $\widehat{X}^k$, the pointwise ergodic convergence along  the subsequence $\mathcal{N}$ holds, which is impossible since, by taking
$A=\bigcup_{n \in Z}\widehat{T}^n\Delta$, for any $N \in \N$, we have
$$\int \frac{1}{N}\sum_{n=1}^{N}\1_A(\widehat{T}^nx) d\widehat{\lambda}=\frac12 \neq 0=\bigotimes_{j=1}^{k}\mu(A).$$
This end the proof of the claim, and the proof of Theorem \ref{main2} is complete.
\end{proof}
\begin{rem}$~~~$
\begin{enumerate}[(i)]
\item The same argument yields that the maximal ergodic inequality does not hold for $\widehat{T}$ under $\widehat{\nu}$ and
 $\widehat{\mu}_{\Delta}$.
\end{enumerate}
\end{rem}
Furthermore, from Proposition \ref{subsequence}, we are able to deduce our main result concerning the pointwise convergence of multiple ergodic averages (P.C.M.E.A) along a subsequence holds (Theorem \ref{main1}). Indeed, we claim that if
$k \in \N^*$ and $(\widehat{X},\widehat{\cb},\widehat{\mu},\widehat{T}_i)_{i=1}^{k}$ is a  Stone-Jewett-Weiss topological model associated to the  finite family of dynamical systems
 $(X,\cb,\mu,T_i)_{i=1}^{k}$ where $\mu$ is a probability measure space, and  $\widehat{T}_1,T_2,\cdots,\widehat{T}_k$ are commuting weakly mixing transformations on $\widehat{X}$ such that for any $i \neq j$, the map $\widehat{T}_i \circ \widehat{T}_j^{-1}$ is ergodic. Then, there exists a subsequence $N_k$ such that, for
every $f_i \in L^{\infty}(\widehat{\mu})$, $i=1,\cdots,k$, the averages
$$ \frac1{N_k}\sum_{n=1}^{N_k} \prod_{i=1}^{k}f_i(\widehat{T}_i^nx)$$
converge almost everywhere to $\ds \prod_{j=1}^{k}\int f_j d\mu$.
\begin{proof}[\textbf{Proof of Theorem \ref{main1}}]We give the proof only for the case $k=2$; the other cases are left to the reader. Let $f_1,f_2 \in  L^{\infty}(\widehat{X})$. Then, by Lusin theorem \cite[p.53]{Rudin}, for any $\epsilon>0$, there exists $g_1,g_2 \in \mathcal{C}(\widehat{X})$ such that
\begin{eqnarray*}
||g_i||_{\infty} &\leq& ||f_i||_{\infty},\\
\widehat{\mu}\Big\{x~~:~~ f_i \neq g_i \Big\} &\leq& \epsilon, {\textrm{~~for~~each~~}}i=1,2.
\end{eqnarray*}
Therefore, by Proposition \ref{subsequence}, there exists a Borel set $X_{\epsilon}$ such that, for any $x \in X_{\epsilon}$, for a large $k$, we have
$$\Big|\frac1{N_k}\sum_{n=0}^{N_k-1}g_1(\widehat{T_1}^nx) g_2(\widehat{T_2}^nx)-\widehat{\mu}(g_1)\widehat{\mu}(g_2)\Big|
<\epsilon.$$
\noindent{}Now, write
\begin{eqnarray*}
&&\Big|\frac1{N_k}\sum_{n=0}^{N_k-1}f_1(\widehat{T_1}^nx) f_2(\widehat{T_2}^nx)-\widehat{\mu}(f_1)\widehat{\mu}(f_2)\Big|\\
&=&\Big|\frac1{N_k}\sum_{n=0}^{N_k-1}f_1(\widehat{T_1}^nx) f_2(\widehat{T_2}^nx)-g_1(\widehat{T_1}^nx) f_2(\widehat{T_2}^nx)+\\
&&\frac1{N_k}\sum_{n=0}^{N_k-1}g_1(\widehat{T_1}^nx) f_2(\widehat{T_2}^nx)-g_1(\widehat{T_1}^nx) g_2(\widehat{T_2}^nx)+\\
&&\frac1{N_k}\sum_{n=0}^{N_k-1}g_1(\widehat{T_1}^nx) g_2(\widehat{T_2}^nx)-\widehat{\mu}(g_1)\widehat{\mu}(g_2)
+\widehat{\mu}(g_1)\widehat{\mu}(g_2)-\widehat{\mu}(f_1)\widehat{\mu}(f_2)\Big|.
\end{eqnarray*}
Hence, by the triangle inequality, we get
\begin{eqnarray*}
&&\Big|\frac1{N_k}\sum_{n=0}^{N_k-1}f_1(\widehat{T_1}^nx) f_2(\widehat{T_2}^nx)-\widehat{\mu}(f_1)\widehat{\mu}(f_2)\Big|\\
&\leq&\Big|\frac1{N_k}\sum_{n=0}^{N_k-1}(f_1-g_1)(\widehat{T_1}^nx) f_2(\widehat{T_2}^nx)\Big|+
\Big|\frac1{N_k}\sum_{n=0}^{N_k-1}g_1(\widehat{T_1}^nx) (f_2-g_2)(\widehat{T_2}^nx)\Big|+\\
&&\Big|\frac1{N_k}\sum_{n=0}^{N_k-1}g_1(\widehat{T_1}^nx) g_2(\widehat{T_2}^nx)-\widehat{\mu}(g_1)\widehat{\mu}(g_2)\Big|
+\Big|\widehat{\mu}(g_1)\widehat{\mu}(g_2)-\widehat{\mu}(f_1)\widehat{\mu}(f_2)\Big|\\
&\leq&\frac1{N_k}\sum_{n=0}^{N_k-1}|f_1-g_1|(\widehat{T_1}^nx) \big\|f_2\big\|_{\infty}+
\frac1{N_k}\sum_{n=0}^{N_k-1} |f_2-g_2|(\widehat{T_2}^nx) \big\|g_1\big\|_{\infty}+\\
&&\Big|\frac1{N_k}\sum_{n=0}^{N_k-1}g_1(\widehat{T_1}^nx) g_2(\widehat{T_2}^nx)-\widehat{\mu}(g_1)\widehat{\mu}(g_2)\Big|
+\Big|\widehat{\mu}(g_1)\widehat{\mu}(g_2)-\widehat{\mu}(f_1)\widehat{\mu}(f_2)\Big|\\
&\leq&\frac1{N_k}\sum_{n=0}^{N_k-1}|f_1-g_1|(\widehat{T_1}^nx) \big\|f_2\big\|_{\infty}+
\frac1{N_k}\sum_{n=0}^{N_k-1} |f_2-g_2|(\widehat{T_2}^nx) \big\|f_1\big\|_{\infty}+\\
&&\Big|\frac1{N_k}\sum_{n=0}^{N_k-1}g_1(\widehat{T_1}^nx) g_2(\widehat{T_2}^nx)-\widehat{\mu}(g_1)\widehat{\mu}(g_2)\Big|
+\Big|\widehat{\mu}(g_1)\widehat{\mu}(g_2)-\widehat{\mu}(f_1)\widehat{\mu}(f_2)\Big|.
\end{eqnarray*}
By letting $k$ goes to infinity and applying Birkhoff ergodic theorem combined with Proposition \ref{subsequence}, we obtain
\begin{eqnarray*}
&&\Big|\frac1{N_k}\sum_{n=0}^{N_k-1}f_1(\widehat{T_1}^nx) f_2(\widehat{T_1}^nx)-\widehat{\mu}(f_1)\widehat{\mu}(f_2)\Big|
\\
&\leq& \int |f_1-g_1|d\widehat{\mu}\big\|f_2\big\|_{\infty}
+\int|f_2-g_2|d\widehat{\mu}\big\|f_1\big\|_{\infty}
+\Big|\widehat{\mu}(g_1)\widehat{\mu}(g_2)-\widehat{\mu}(f_1)\widehat{\mu}(f_2)\Big|.
\end{eqnarray*}
Similar arguments apply to the third term gives
$$\Big|\widehat{\mu}(g_1)\widehat{\mu}(g_2)-\widehat{\mu}(f_1)\widehat{\mu}(f_2)\Big| \leq
\widehat{\mu}(|g_1-f_1|)\big\|f_2\big\|_{\infty}+
\widehat{\mu}(|g_2-f_2|)\big\|f_1\big\|_{\infty}\leq
4\big\|f_2\big\|_{\infty}\big\|f_1\big\|_{\infty}\epsilon.$$
Summarizing, we have
$$\limsup\Big|\frac1{N_k}\sum_{n=0}^{N_k-1}f_1(\widehat{T_1}^nx) f_2(\widehat{T_1}^nx)-\widehat{\mu}(f_1)\widehat{\mu}(f_2)\Big|
\leq 8\big\|f_2\big\|_{\infty}\big\|f_1\big\|_{\infty}\epsilon.$$
We conclude by taking $\ds X'=\bigcap_{\overset{\epsilon >0}{\epsilon \in \Q}}X_{\epsilon}$ that, for every $x \in X'$,
$$\limsup\Big|\frac1{N_k}\sum_{n=0}^{N_k-1}f_1(\widehat{T_1}^nx) f_2(\widehat{T_1}^nx)-\widehat{\mu}(f_1)\widehat{\mu}(f_2)\Big|=0.$$
This finishes the proof of the theorem.
\end{proof}
\begin{rem}
 \begin{enumerate}[(i)]
 \item The proof above gives more, namely the pointwise convergence along a subsequence of the multiple ergodic averages holds for any $f_1,f_2,\cdots,f_k \in L^1(\widehat{\mu})$.\\
\item After the appearance of the second version of this paper, the author received an emails from Nikos Frantzikinakis and Pavel Zorin-Kranich in which they mentioned that the proof of the first main result of this paper (Theorem \ref{main1}) can be obtained without using the topological model arguments and by appealing to the maximal inequality.\\


The proof given here is based on the Birkhoff ergodic theorem. At this point, we should mention that by the Kolmogorov-Stein continuity principal theorem, the maximal inequality and the  Birkhoff ergodic theorem are equivalent. For the nice account, we refer to \cite{Garcia}. Nevertheless, in the proof of Theorem \ref{main1} we don't technically need to extend the maximal inequality to our setting. The same strategy was done recently in \cite{elabdal-al}. Therein, the authors reproved the Sarnak's result which say that for any dynamical system $(X,\ca, \mu, T)$ and for any function $f$ square-integrable,
$$\frac1{N}\sum_{j=1}^{N-1}\boldsymbol{\mu}(n)f(T^nx) \tend{N}{+\infty}0, \textrm{~~a.e.}$$
where $\boldsymbol{\mu}$ is the M\"{o}bius function given by
$$\boldsymbol{\mu}=\boldsymbol{\lambda}. \1_Q,$$
where $Q$ is the set of square-free integers. The integer $n$ is said to be square-free
if there is no prime number $p$ such that $n$ is in the class of $0$ mod
$p^2$.
\end{enumerate}
\end{rem}
At this point let us prove that we can apply Tuleca's theorem to the non-singular dynamical system
$(\widehat{X}^k,\widehat{\cb}^k,\widehat{\nu},\widehat{T})$. Indeed, we have the following lemma:
\begin{lem}\label{dissipative}The non-singular dynamical $(\widehat{X}^k,\widehat{\cb}^k,\widehat{\nu},\widehat{T})$
admits a $\sigma$-finite invariant measure $\overline{\nu}$ equivalent to
$\widehat{\nu}$.
\end{lem}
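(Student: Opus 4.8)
The plan is to exhibit the invariant measure explicitly by spreading the diagonal measure $\widehat{\mu}_{\Delta}$ along the whole $\widehat{T}$-orbit. I would set
$$\overline{\nu}(A)=\sum_{n \in \Z}\widehat{\mu}_{\Delta}(\widehat{T}^n A), \qquad A \in \widehat{\cb}^k.$$
As a countable sum of positive measures, $\overline{\nu}$ is a well-defined countably additive measure, and a one-line reindexing of the summation variable gives $\overline{\nu}(\widehat{T}A)=\sum_{n}\widehat{\mu}_{\Delta}(\widehat{T}^{n+1}A)=\overline{\nu}(A)$, so $\overline{\nu}$ is $\widehat{T}$-invariant. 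For the equivalence with $\widehat{\nu}$, note that both measures have the form $\sum_{n \in \Z} c_n\,\widehat{\mu}_{\Delta}(\widehat{T}^n \cdot)$ with strictly positive coefficients, namely $c_n=\tfrac13 2^{-|n|}$ for $\widehat{\nu}$ and $c_n=1$ for $\overline{\nu}$. Since all coefficients are positive, $\widehat{\nu}(A)=0$ holds precisely when $\widehat{\mu}_{\Delta}(\widehat{T}^n A)=0$ for every $n$, which is also exactly the condition for $\overline{\nu}(A)=0$; hence the two measures have the same null sets and are equivalent.

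The crux is $\sigma$-finiteness, which reduces to showing the translates $\widehat{T}^m\Delta$ of the diagonal are $\widehat{\mu}_{\Delta}$-essentially disjoint. A point $(y,\dots,y)\in\Delta$ lies in $\widehat{T}^j\Delta$ if and only if $(\widehat{T}_i \widehat{T}_1^{-1})^j y=y$ for all $i\neq 1$, so under the identification $\Delta\cong\widehat{X}$ the overlap $\Delta\cap\widehat{T}^j\Delta$ is contained in the fixed-point set $F_j=\{y:(\widehat{T}_2\widehat{T}_1^{-1})^j y=y\}$. For $j\neq 0$ this set is $\widehat{\mu}$-null: writing $S=\widehat{T}_2\widehat{T}_1^{-1}$, the set $F_j$ is $S$-invariant, hence of measure $0$ or $1$ by ergodicity of $S$, and measure $1$ would force $S^{|j|}=\mathrm{Id}$ almost everywhere, which is impossible on a non-atomic Lebesgue space (spreading any $B$ with $0<\widehat{\mu}(B)<1/|j|$ over its $S$-orbit yields an $S$-invariant set of measure strictly between $0$ and $1$). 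Consequently $\widehat{\mu}_{\Delta}(\widehat{T}^j\Delta)=0$ for $j\neq 0$ and $\widehat{\mu}_{\Delta}(\Delta)=\widehat{\mu}(\widehat{X})=1$.

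It then follows that $\overline{\nu}(\widehat{T}^m\Delta)=\sum_{n}\widehat{\mu}_{\Delta}(\widehat{T}^{n+m}\Delta)=\sum_{j}\widehat{\mu}_{\Delta}(\widehat{T}^{j}\Delta)=1$ for every $m$, so each orbit piece has finite $\overline{\nu}$-measure. Moreover $\overline{\nu}$ is carried by $\bigcup_{m\in\Z}\widehat{T}^m\Delta$: if $A$ is disjoint from every $\widehat{T}^m\Delta$, then $\widehat{\mu}_{\Delta}(\widehat{T}^n A)=\widehat{\mu}_{\Delta}(A\cap\widehat{T}^{-n}\Delta)=0$ for all $n$, whence $\overline{\nu}(A)=0$. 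Thus, modulo an $\overline{\nu}$-null set, $\widehat{X}^k=\bigcup_{m\in\Z}\widehat{T}^m\Delta$ is a countable union of sets of finite measure, and $\overline{\nu}$ is $\sigma$-finite.

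The main obstacle is exactly this last step, the control of the overlaps $\widehat{\mu}_{\Delta}(\widehat{T}^j\Delta)$: the rest of the argument is purely formal, but the essential disjointness of the translated diagonals is where the hypotheses genuinely enter, through the ergodicity of each $\widehat{T}_i\widehat{T}_j^{-1}$ combined with the non-atomicity of the Lebesgue space.
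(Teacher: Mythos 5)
Your construction is correct, and it is at bottom the same as the paper's: both proofs spread the diagonal measure along the $\widehat{T}$-orbit of $\Delta$ and get $\sigma$-finiteness from disjointness of the translates $\widehat{T}^n\Delta$. Where you genuinely differ --- and improve on the paper --- is in how that disjointness is handled. The paper asserts the pointwise statement $\widehat{T}^n\Delta\cap\widehat{T}^m\Delta=\emptyset$ for $n\neq m$ ``by the aperiodicity of the maps $T_iT_j^{-1}$'', although aperiodicity is not among the hypotheses of Theorem \ref{main1} (which assumes ergodicity of the $T_iT_j^{-1}$), and it then builds $\overline{\nu}$ piecewise by transporting $\widehat{\nu}|_{\Delta}$ onto each translate; its closing display, $\overline{\nu}(A)=\sum_{n\in\Z}\widehat{\nu}(A\cap\widehat{T}^n\Delta)$, is moreover a typo (read literally it returns $\widehat{\nu}(A)$ itself, which is not invariant), the intended measure $\sum_{n}\widehat{\nu}\bigl(\widehat{T}^{-n}(A\cap\widehat{T}^n\Delta)\bigr)$ being, up to the factor $\tfrac13$, exactly yours. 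You instead use the single global formula $\overline{\nu}=\sum_{n}\widehat{\mu}_{\Delta}(\widehat{T}^n\,\cdot\,)$, which makes invariance a one-line reindexing, and you \emph{prove} the disjointness actually needed, namely $\widehat{\mu}_{\Delta}(\widehat{T}^j\Delta)=0$ for $j\neq 0$, by placing the overlap inside the fixed-point set of $(\widehat{T}_2\widehat{T}_1^{-1})^j$ and killing that set via ergodicity plus non-atomicity. This is the right repair: essential disjointness suffices for $\sigma$-finiteness and follows from the stated hypotheses, making your argument self-contained and purely measure-theoretic. (The paper's stronger pointwise claim can in fact be rescued, but only by using the topological model: the fixed-point set of $(\widehat{T}_2\widehat{T}_1^{-1})^j$ is closed and invariant under the whole commuting action, so if nonempty it would carry an invariant probability measure, contradicting unique ergodicity with $\widehat{\mu}$ non-atomic; the paper gives no such argument.)
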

\begin{proof}By the aperiodicity of the maps $(T_iT_j^{-1})_{j=1, j\neq i}^{k}$, we have
$$T^n \Delta \cap T^m \Delta = \emptyset,$$
for any $n \neq m \in \Z$. Therefore, for any Borel set $A \subset \Delta$, let
$\overline{\nu}(A)=\widehat{\nu}(A).$ If $A \subset \widehat{T}^i\Delta$, for some $i$, then $\widehat{T}^{-i}A \subset \Delta$ and we put
$$\overline{\nu}(A)=\overline{\nu}(\widehat{T}^{-i}A)=\nu(\widehat{T}^{-i}A).$$
We thus have if $A \subset \widehat{T}^i\Delta$ then $\widehat{T}A \subset \widehat{T}^{i+1}\Delta$, and
$$\overline{\nu}(\widehat{T}A)=\widehat{\nu}(\widehat{T}^{-i-1}\widehat{T}A)=
\widehat{\nu}(\widehat{T}^{-i}A)=\overline{\nu}(A).$$
Finally, for any Borel set $A$, we put
$$\overline{\nu}(A)=\sum_{n \in \Z}\widehat{\nu}(A \cap \widehat{T}^i\Delta).$$
Therefore, it is a easy exercise to see that $\overline{\nu}$ is a $\sigma$-finite invariant measure. It is equivalent to
$\widehat{\nu}$ since $T$ is non-singular. The proof of the lemma is complete.
\end{proof}
\noindent{}The proof of Lemma \ref{dissipative} yields that the non-singular dynamical system $(\widehat{X}^k,\widehat{\cb}^k,\widehat{\nu},\widehat{T})$ is completely dissipative. Therefore, according to Hajian-Kakutani theorem \cite{H-kakutani}, there exists no equivalent finite $\widehat{T}$-invariant measure to $\widehat{\nu}$ on $\widehat{X}^k$. We summarize the previous result as follows
\begin{Prop}The non-singular dynamical system $(\widehat{X}^k,\widehat{\cb}^k,\widehat{\nu},\widehat{T})$ is completely dissipative with $\sigma$-invariant measure. We further have, for any $f \in L^1(\nu)$, for almost all $x \in \widehat{X}^k$,
$$\lim_{N \longrightarrow +\infty}\frac1{N}\sum_{j=0}^{N-1}f \circ \widehat{T}^n(x) \frac{d(\nu \circ \widehat{T}^n)}{d\nu}(x)$$
exists.
\end{Prop}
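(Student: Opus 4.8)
The plan is to read this statement as a synthesis of the three facts already assembled above, so that essentially nothing new has to be proved; the real work is only in checking that the hypotheses of Tulcea's theorem are genuinely in force.

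First I would dispose of complete dissipativity. By the aperiodicity of the maps $T_iT_j^{-1}$ used in the proof of Lemma \ref{dissipative}, the diagonal sets satisfy $\widehat{T}^n\Delta\cap\widehat{T}^m\Delta=\emptyset$ for $n\neq m$, so $\Delta$ is a wandering set. Since, by its very definition, $\widehat{\nu}=\frac13\sum_{n\in\Z}2^{-|n|}\widehat{\mu}_\Delta\circ\widehat{T}^n$ is carried by $\bigcup_{n\in\Z}\widehat{T}^n\Delta$, the set $\Delta$ plays exactly the role of the wandering set $W$ in the definition of complete dissipativity recalled before the Hopf decomposition theorem: the $\widehat{T}^n\Delta$ are pairwise disjoint and their union is $\widehat{X}^k$ mod $\widehat{\nu}$. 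Equivalently, the conservative part $X_c$ of the Hopf decomposition is $\widehat{\nu}$-null, so that $\sum_{n\geq0}\frac{d\widehat{\nu}\circ\widehat{T}^n}{d\widehat{\nu}}<+\infty$ holds $\widehat{\nu}$-a.e.

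Next, the existence of a $\sigma$-finite $\widehat{T}$-invariant measure $\overline{\nu}$ equivalent to $\widehat{\nu}$ is precisely the content of Lemma \ref{dissipative}; combined with the Hajian--Kakutani theorem already cited, this shows moreover that no finite invariant measure equivalent to $\widehat{\nu}$ can exist, consistently with complete dissipativity.

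Finally, for the convergence of the weighted averages I would invoke Tulcea's theorem (Theorem \ref{Tuleca}) verbatim: $\widehat{T}$ is non-singular on the Lebesgue space $(\widehat{X}^k,\widehat{\cb}^k,\widehat{\nu})$ and, by Lemma \ref{dissipative}, admits the equivalent $\sigma$-finite invariant measure $\overline{\nu}$, so for every $f\in L^1(\widehat{\nu})$ the averages $\frac1N\sum_{n=0}^{N-1}f\circ\widehat{T}^n\,\frac{d\widehat{\nu}\circ\widehat{T}^n}{d\widehat{\nu}}$ converge $\widehat{\nu}$-a.e. to some $f^*\in L^1(\widehat{\nu})$. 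For bounded $f$ one can even identify the limit using complete dissipativity: since $\sum_n\frac{d\widehat{\nu}\circ\widehat{T}^n}{d\widehat{\nu}}<\infty$ a.e., the series $\sum_n f\circ\widehat{T}^n\,\frac{d\widehat{\nu}\circ\widehat{T}^n}{d\widehat{\nu}}$ converges absolutely, its partial sums are bounded, and dividing by $N$ forces the Cesàro averages to tend to $0$, whence $f^*=0$. I expect the only genuine obstacle to be the bookkeeping of the Radon--Nikodym cocycle $\frac{d\widehat{\nu}\circ\widehat{T}^n}{d\widehat{\nu}}$ needed to confirm that the conservative part is $\widehat{\nu}$-null and that Tulcea's hypotheses are truly met; once this is established, both the existence of the limit and its value follow automatically.
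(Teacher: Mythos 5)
Your proposal is correct and takes essentially the same route as the paper: complete dissipativity is read off from the wandering diagonal $\Delta$ exhibited in the proof of Lemma \ref{dissipative} (whose $\widehat{T}$-orbit carries $\widehat{\nu}$), the equivalent $\sigma$-finite invariant measure is exactly Lemma \ref{dissipative}, and the a.e.\ convergence is Tulcea's theorem (Theorem \ref{Tuleca}) applied verbatim, which is precisely how the paper assembles the proposition. Your further identification of the limit as $0$ for bounded $f$, using the a.e.\ finiteness of $\sum_{n\geq 0}\frac{d\widehat{\nu}\circ\widehat{T}^{n}}{d\widehat{\nu}}$ on the dissipative part, is a correct refinement that the paper does not state.
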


\section{Some remark on the problem of the almost sure convergence}
As in the previous section, let us consider $(\widehat{X},\widehat{\cb},\widehat{\mu},\widehat{T}=(\widehat{T}_i)_{i=1}^{k})$ the Stone-Jewett-Weiss topological model associated to $(X,\cb,\mu,\overline{T})$, and for any $A_1\times A_2 \times \cdots \times A_k \in \cb^k$, we still denoted by $\widehat{\nu}$ the measure given by
\begin{eqnarray*}
\widehat{\nu}(A_1\times A_2 \times \cdots A_k)=
\frac13\sum_{n \in \Z}\frac1{2^{|n|}}\widehat{\mu}_{\Delta}(\widehat{T}^n(A_1 \times A_2 \times \cdots \times A_k)),
\end{eqnarray*}
where $\widehat{\mu}_{\Delta}$ is the diagonal measure on $\widehat{X}^k$ associated to $\widehat{\mu}$.\\

In the same manner as before, by Lemma \ref{curcial},
for any  $A_1\times A_2 \times \cdots \times A_k \in \widehat{\ca}^k$, we have
$$\int \frac1{N}\sum_{n=0}^{N-1}\1_{A_1\times A_2 \times \cdots \times A_k}\circ \widehat{T}^n d\widehat{\lambda}
\tend{N}{+\infty}\bigotimes_{i=1}^{k}\widehat{\mu}\Big(A_1\times A_2 \times \cdots \times A_k\Big).$$
Therefore, for any $f_1 \otimes f_2 \otimes \cdots \otimes f_k \in \mathcal{C}(\widehat{X})^k$, we have
 $$\int \frac1{N}\sum_{n=0}^{N-1}\big({f_1\otimes f_2 \otimes \cdots \otimes f_k}\big)\circ \widehat{T}^n d\widehat{\lambda}
\tend{N}{+\infty} \int f_1\otimes f_2 \otimes \cdots \otimes f_k d(\bigotimes_{i=1}^{k}\widehat{\mu}).$$
This gives, by the standard argument, that  for any continuous functions $f$ on $\widehat{X}^k$,
\begin{eqnarray}\label{continue}
 \int  \frac1{N}\sum_{n=0}^{N-1}\widehat{T}^{n}f d\widehat{\lambda}
\tend{N}{+\infty} \int f\frac{d(\bigotimes_{i=1}^{k}\widehat{\mu})}{d\widehat{\lambda}}d\widehat{\lambda}.
\end{eqnarray}
Hence, for any continuous functions $f$ on $\widehat{X}^k$,

$$\int f \frac1{N} \sum_{n=1}^{N} {(\widehat{T})^*}^n\1 d\widehat{\lambda}
\tend{N}{+\infty} \int f  \frac{d(\bigotimes_{i=1}^{k}\widehat{\mu})}{d\widehat{\lambda}} d\widehat{\lambda}.$$

\noindent{}Consequently, for any Borel subset $A$ of $\widehat{X}^k$, the sequence
$\Big(\ds \int_A \frac1{N} \sum_{n=1}^{N} {(\widehat{T})^*}^n\1 d\widehat{\lambda}\Big)_{N \in \N}$ is a bounded sequence \footnote{Notice that for instance one may apply a standard argument, precisely, the portmanteau theorem \cite[p.15]{Bilingsly}  to get
$$\bigotimes_{j=1}^{k}\widehat{\mu}(\overset{\circ}{A}) \leq \liminf\Big(\frac1{N}\sum_{n=0}^{N-1}\widehat{\lambda}(\widehat{T}^n{A})\Big) \leq \limsup\Big(\frac1{N}\sum_{n=0}^{N-1}\widehat{\lambda}(\widehat{T}^n{A})\Big)\leq
2 \widehat{\lambda}(\overline{A}).$$
For the historical reference on this theorem, we refer the reader to the references in the Billingsly book.}.

Therefore, applying the Ryll-Nardzewski procedure (see section \ref{S2}), it follows that there is a $\widehat{T}$-invariant finitely additive measure $\rho$ given by
$$\rho(A)={\rm{\bf {MBlim}}}\Big(\int_{A} \frac1{N}\sum_{n=1}^{N} {(\widehat{T})^*}^n\1 d\widehat{\lambda}\Big).$$

We further have that for any $\widehat{T}$-invariant Borel subset $D$ of $\widehat{X}^k$,
 $\rho(D)=\widehat{\lambda}(D)$. $\rho$ is called a the Ryll-Nardzewski additive measure or the Ryll-Nardzewski charge. For a deep discussion of the connection between the Mazur-Banach limit and the invariant charge, we refer the reader to \cite{Sucheston2}, \cite{Goffman}. If the Hartman condition holds then the Ryll-Nardzewski additive measure $\rho$ is $\sigma$-additive, and following the idea of Ryll-Nardzewski, it suffices to apply the Birkhoff ergodic theorem to the dynamical system $(\widehat{X}^k,\bigotimes_{j=1}^{k}\widehat{\cb},\rho,\widehat{T})$. But, $\rho$ is only finitely additive probability measure on the $\sigma$-algebra $\bigotimes_{j=1}^{k}\widehat{\cb}$. Nevertheless, in our case, for any Borel set $A$ in the algebra, we have $\rho(A)=\bigotimes_{j=1}^{k}\widehat{\mu}(A)$. Therefore, for any continuous function $f \in \mathcal{C}(\widehat{X}^k)$, we have $\rho(f)=\bigotimes_{j=1}^{k}\mu(f)$ (see section 4). Indeed, for any continuous functions $f$ on $\widehat{X}^k$, there is a sequence of step functions $f_m=\sum_{i \in I_m}a_i \1_{A_i}$, where $I_m$ is finite set and $A_i$ a Borel set in the algebra, such that
 $f_m$ converge uniformly to $f$. Let $\epsilon>0$ and $m_0$ such that for any $m \geq m_0$, we have
 $$\|f_m-f\|_{\infty}<\epsilon.$$
Then, for any positive integer $N$, we can write
$$\frac1{N}\sum_{n=1}^{N}\widehat{\lambda}(f_m \circ \widehat{T}^n)-\epsilon
\leq  \frac1{N}\sum_{n=1}^{N}\widehat{\lambda}(f \circ \widehat{T}^n)
\leq \frac1{N}\sum_{n=1}^{N}\widehat{\lambda}(f_m \circ \widehat{T}^n)+\epsilon.$$
This combined with the classical properties of the Mazur-Banach limit gives
$$\rho(f_m)-\epsilon \leq \rho(f) \leq \rho(f_m)+\epsilon,$$
which implies that $\rho(f_m)$ converge to $\rho(f)$ and allows us to extend $\rho$ to the space of the continuous functions.
We further have, by the portmanteau theorem, that for any nonempty open set $O$ and any closet set $F$,
$$\rho(O) \geq \bigotimes_{j=1}^{k}\widehat{\mu}(O)>0,{\textrm{~~and~~}}
\rho(F) \leq \bigotimes_{j=1}^{k}\widehat{\mu}(F).$$
In the same manner we can define $\rho(f)$, for any lower semi-continuous (l.s.c) function $f$ and extend the previous classical inequalities to the case of the l.s.c functions and upper semi-continuous functions. Indeed, by  a classical argument, for any lower semi-continuous $f$ bounded from below there exists a sequence of lower semi-continuous $f_m$ which converge uniformly to $f$ such that $f_m$ is a finite sum of indicator functions of open set. Precisely, let us denote by $\mathcal{C}^{-1/2}(\widehat{X})$ (respectfully ${\mathcal{C}_u^{-1/2}}(\widehat{X})$) the set of l.s.c. functions on $X$ (respectfully $\mathcal{C}^{-1/2}(\widehat{X})$ equipped with uniform convergence topology) and without loss of generality we can assume that  $0 < f(x) \leq 1$ for all $x \in \widehat{X} $. Fix $m \geq 1$ and define $f_m$ by
$$f_m=\sum_{j=1}^{m}\frac{1}{m} \1_{O_j} {\textrm{~,~where~~}}O_j=\big\{x~~:~~f(x) >\frac{j}{m}\big\}. $$
Obviously, $O_j$ is a open set and it is easy to see that the indicator function of any open set is in $\mathcal{C}^{-1/2}(\widehat{X})$. We further have that ${\mathcal{C}_u^{-1/2}}(\widehat{X})$ is a closed positive lattice cone that contains the spermium of each of its subsets. Moreover, in the previous procedure, the function $f$ is truncated to $\frac{j}{m}$ if $f(x) \in [\frac{j}{m},\frac{j+1}{m})$. Thus $f_m \leq f$ and $\ds \sup_{x \in X}|f_m(x)-f(x)|\leq$$\frac{1}{k}$. The result follows by letting $m$ goes to infinity.
Therefore, $\rho(f_m)$ converge to $\rho(f)$, by the classical property of the Banach limit. We thus get, again by the portmanteau theorem, that
\begin{eqnarray}\label{ilsc}
\rho(f) \geq \bigotimes_{j=1}^{k}\widehat{\mu}(f), \textrm{~~for~~every ~~l.~s.~c.~} f
\textrm{~~bounded~~from~~below,}
\end{eqnarray}
and
\begin{eqnarray}\label{ulsc}
\rho(f) \leq \bigotimes_{j=1}^{k}\widehat{\mu}(f), \textrm{~~for~~every ~~u.~~s.c.~~} f
\textrm{~~bounded~~from~~above.}
\end{eqnarray}

Notice that under our assumption $\widehat{X}$ is compact and we don't need to assume that the l.s.c functions (resp. the u.s.c functions) are bounded below (resp. bounded above). With this in mind, one may try to apply the standard argument from ergodic theory to get some kind of ergodic theorem for the charge $\rho$ but it is turn out that $\rho$ can not be $\sigma$-additive (see Remark below).

\begin{rem}$~~~$
\begin{enumerate}[(a)]
\item Notice that the proof above gives more, namely the Hartman condition does not hold for the
 non-singular dynamical system $(X,\cb,\widehat{T},\widehat{\lambda})$. If not, we get that $\rho$ is $\sigma$-additive on the $\sigma$-algebra (notice that it is obvious that $\rho$ is $\sigma$-additive on the algebra \footnote{See for example the proof of Lemma 4.6 in \cite{Gklauss}.}). Indeed, let $A_i$ be a disjoint Borel set and $l$ be a positive integer. Then
    $$\rho\Big(\bigcup_{i \geq l}A_i\Big) \leq K \widehat{\lambda}\Big(\bigcup_{i \geq l}A_i\Big) \tend{l}{+\infty}0.$$
 Hence,
     $$\rho\Big(\bigcup_{i \geq 1}A_i\Big)=\sum_{i=1}^{l}\rho(A_i)+\rho\Big(\bigcup_{i \geq l}A_i\Big)\tend{l}{+\infty}
     \sum_{i=1}^{+\infty}\rho(A_i).$$
 We further have, from our proof, that for any continuous function $f$, $\rho(f)=\bigotimes_{j=1}^{k}\mu(f)$,  and $\rho(D)=\widehat{\lambda}(D)$ for any $\widehat{T}$-invariant set. This gives that
 $$\bigotimes_{j=1}^{k}\mu(\bigcup_{n \in Z}\widehat{T}^n\Delta)=\widehat{\lambda}(\bigcup_{n \in Z}\widehat{T}^n\Delta),$$ which is impossible. We thus get, by Theorem 1 in \cite{Ryll}, that the pointwise ergodic theorem does not hold for the
 non-singular dynamical systems $(X,\cb,\widehat{T},\widehat{\lambda})$ and $(X,\cb,\widehat{T},\widehat{\nu})$
\item By construction, the charge $\rho$ satisfy: for any  $B$ in the algebra, $\rho(B)>0$ and for any $n\in \Z$, we have $\rho(T^nB \bigcap B)>0$.
\item Let $(Y,\cb,\rho,S)$ be a dynamical system with finitely additive probability measure $\rho$ and $\cb$ be a $\sigma$-algebra generating by some dense algebra $\ca$. We suppose that the restriction of $\rho$ coincide on the $\sigma$-algebra of $S$-invariant set with some $\sigma$-additive measure $\lambda$ (not necessary invariant under $S$). We further suppose that $\lambda \geq K.\mu$ and $\mu$ is a invariant measure.  Then, for any Borel set $D$ such that $S^nD \bigcap S^mD =\emptyset$, for any $n \neq m$, we have
    $\rho(\bigcup_{n \in \Z}S^nD)=1$. Indeed, $\rho(\bigcup_{n \in \Z}S^nD)=\lambda(\bigcup_{n \in \Z}S^nD)=1$.
\item Note that we have actually proved that there is a dynamical system with a finitely additive probability measure for which the the Poincar\'e recurrence Theorem holds.
\item Our proof yields also that there is non-singular maps such that
$\bigotimes_{j=1}^{k}\mu$ is in the convex set $\Pi(\widehat{T},\widehat{\lambda})$ given by
$$\Pi(\widehat{T},\widehat{\lambda})=\overline{{\rm{env}}\{\widehat{\lambda} \circ \widehat{T^n}\}}^{W^*},$$
that is, the convex set
\begin{eqnarray*}
\Pi^*(\widehat{T},\widehat{\lambda})&=&\overline{\Big\{\rho \in B(0,1]~~:~~ \rho \in \Pi(\widehat{T},\widehat{\lambda}) {~~and~~} \rho \circ \widehat{T}=\rho,~~\rho << \widehat{\lambda}\Big\}}^{W^*}\\
&\subset& B(0,1].
\end{eqnarray*}
is not empty, $B(0,1]$ is the unit Ball in the dual space of $C(\widehat{X})$. We thus get that it is weakly compact by Alaoglu-Banach-Bourbaki. This combined with the Krein-Milman theorem yields that it is the weakly star closure of the convex hall of its extremal points $E(\widehat{T})$, that is,
$$\Pi_1(\widehat{T},\widehat{\lambda})=\overline{{\rm{env}}E(\widehat{T})}.$$ Notice that since $\widehat{X}^k$ is a compact metric space, it follows that $E(\widehat{T})$ is the set of all $\widehat{\rho} \in E(\widehat{T})$ for which $\widehat{T}$ is an ergodic measure-preserving transformation of $(X^k, \bigotimes_{j=1}^{k}B,\widehat{\rho})$.
\item It follows from $(e)$ that one can substitute $\widehat{\lambda}$ by any limit of convex
combinations of the ergodic joining of $\widehat{T}_1,\widehat{T}_2,\cdots,\widehat{T}_k$. Here, the chosen ergodic joining is
$\bigotimes_{j=1}^{k}\mu$ and the off-diagonal measure $\mu_{\widehat{T}^n\Delta}$.
\item Following the Ryll-Nardzewski procedure, one may construct
 a large class of invariant Ryll-Nardzewski probability measures $\widehat{\rho} \leq K.\widehat{\lambda}$ associated to the subclass of compact subset $K$ such that $\widehat{\lambda}(K)>0$. Indeed, the sequence
$\frac1{N}\sum_{n=1}^{N}\widehat{\lambda}_K(f\circ \widehat{T}^n)$ is a bounded sequence, where
$\widehat{\lambda}_K$ is the restriction of $\widehat{\lambda}$ to $K$ normalized. Therefore, by Mazur-Banach limit procedure, we define
$$\rho(f)={\rm{\bf {MBlim}}}\Big(\frac1{N}\sum_{n=0}^{N-1}\widehat{\lambda}(f\circ \widehat{T}^n)\Big).$$
It follows that $\rho$ is a bounded operator on the space of continuous function $\mathcal{C}(\widehat{X})$. We further have that
$\rho \circ \widehat{T}=\rho$ and if $f$ is a $\widehat{T}$-invariant function then $\rho(f)=\widehat{\lambda}(f)$.
Hence, by Riesz representation theorem, there is a unique
Borel probability measure $\widehat{\rho}$ such that $\rho(f)=\widehat{\rho}(f)$ for all $f \in \mathcal{C}(\widehat{X})$.
The condition $(1)$ implies that, for any Borel set $A$ in the $\sigma$-Borel algebra $\cb(\widehat{X})$,  we have
$$\widehat{\rho}(A) \leq K. \widehat{\lambda}(A).$$
We called such class a Ryll-Nardzewski class of $\widehat{T}$-invariant probability measures for a given non-singular map.
\end{enumerate}
\end{rem}
\begin{thank}I would like to express my warmest thanks to Professor XiangDong Ye for his invitation to Hefei University of Technology and for bringing to my attention the problem of the pointwise convergence of Furstenberg average and for the e-correspondences. Thanks also to Professor Bernard Host, Professor Nikos Frantzikinakis and Professor Idriss Assani for their interest to this work and their comments.
The author is also thankful to his colleague for pointing out a mistake in the first draft posted on arXiv. Finally, my posthumous thanks to Professor Antoine Brunel who introduce me to the work of R. Sato.
\end{thank}

\end{document}